\documentclass[a4paper]{jnmp}
\usepackage{amsmath}
\usepackage{amsthm}
\usepackage{amssymb}

\textheight = 23.5cm 
\textwidth = 15.3cm
\setlength{\oddsidemargin  = 0mm} 
\setlength{\evensidemargin = 0mm}

\newcommand{\abs}[1]{\vert #1 \vert}  
\newcommand{\ov}{\overline}
\newcommand{\gr}{\mathrm{grad}}
\newcommand{\ii}{\mathrm{i}}

\newcommand{\dif}{\mathrm{d}}

\newcommand{\di}{\mathrm{div}}
\newcommand{\tr}{\mathrm{trace}}
\newcommand{\CC}{\mathbb{C}}
\newcommand{\Cc}{\mathfrak{C}}

\newcommand{\HH}{\mathcal{H}}
\newcommand{\VV}{\mathcal{V}}

\newcommand{\RR}{\mathbb{R}}

\newcommand{\Ss}{\mathbb{S}}
\newcommand{\Mm}{\mathbb{M}}
\newcommand{\Tt}{\texttt{T}}

\setcounter{page}{1}
\JNMPnumberwithin{equation}{section}
\resetfootnoterule

\newtheorem*{acknowledgments}{Acknowledgments}

\newtheorem{te}{Theorem}[section]
\newtheorem{pr}{Proposition}[section]
\newtheorem{co}{Corollary}[section]
\newtheorem{lm}{Lemma}[section]
\theoremstyle{definition}
\newtheorem{de}{Definition}[section]    
\newtheorem{re}{Remark}[section]
\newtheorem{ex}{Example}[section]

\begin{document}

\renewcommand{\evenhead}{R. Slobodeanu}
\renewcommand{\oddhead}{Perfect fluids from high power sigma-models}

\thispagestyle{empty}



\Name{Perfect fluids from high power sigma-models}

\label{firstpage}

\Author{Radu Slobodeanu}

\Address{Department of Theoretical Physics and Mathematics, University of Bucharest,\\
P.O. Box Mg-11, RO--077125 Bucharest-M\u agurele, Romania. \\
E-mail: \emph{radualexandru.slobodeanu@g.unibuc.ro}}


\begin{abstract}
\noindent Certain solutions of a sextic sigma-model Lagrangian reminiscent of Skyrme model correspond to perfect fluids with stiff matter equation of state. We analyse from a differential geometric perspective this correspondence extended to general barotropic fluids. 
\end{abstract}



\section{Introduction}
Relativistic hydrodynamics is an important theoretical tool in heavy-ion physics, astrophysics, and cosmology. In a first stage, it provides us with a good approximation in terms of \textit{perfect fluids} for many physical systems, from quark-gluon plasma to gaseous interior of a collapsing star. A perfect fluid is defined \cite{wei} as having at each point a velocity $\vec{v}$, such that an observer moving with this velocity sees the fluid around him as isotropic. For a standard introduction in this subject see e.g. \cite{choq, mis, wei}.

The fact that perfect fluid dynamics admits a Lagrangian formulation is widely known. In the approach taken in \cite{bei, cristo, langl, ratta, smi}, the Lagrangian is given in terms of a "dual" variable $\varphi$ which is a submersion / field  defined on the spacetime such that the fluid velocity four-vector spans its vertical foliation. In this (locally) dual picture the Euler equations for a stiff fluid come by variational principle from a sextic Lagrangian in the derivatives of $\varphi$, and the conservation of energy along flow lines is automatically satisfied by construction. It is interesting to notice that this sextic Lagrangian is exactly the extension of the Skyrme model discussed in \cite{ada, piet}. In fact the equations for all perfect fluids with cosmological equation of state come in a similar way by raising to a proper power this sextic Lagrangian. 
In this paper, we detail these constructions in the standard (coordinate-free) differential geometric setup and then we address the following topics: the geometric interpretation of the Euler equations for the fluid (Euler-Lagrange equations for the field), the effect of (bi)conformal change of metrics, the irrotational case, the shear-free case in relation with the harmonic morphisms theory and the coupling with gravity.  We illustrate these issues with detailed examples that cover most of the presently known exact solutions for perfect fluid equations. The solutions are re-derived in this new context via the reduction by symmetry method that simplify Euler equations to an ODE. Their natural derivation and the fact that they automatically satisfy the conservation of energy along the flow are two primary advantages of the present approach that may further contribute to uncover other \textit{exact} solutions yielding a new insight into the fluid dynamics \cite{nagy}.

Our initial motivation was to bring the relativistic fluid problem in the intensively studied area of geometric variational methods and analysis of critical mappings between manifolds (mainly developed in the Riemannian case, see e.g. \cite{ud, ura}), but the paper can also be connected with the kinetically driven inflation models \cite{muk, arro, di} or gravitating skyrmions \cite{pie}.

\section{Perfect fluids} 

Throughout the paper $(M^m, g)$ will denote an $m$-dimensional Lorentz manifold. In particular, the
$m$-dimensional \textit{Minkowski space} $\Mm^m = \RR^{m}_{1}$
is defined to be $\RR^{m}$ endowed with the metric of signature
$(1, m - 1)$ given in standard coordinates $(x_1, x_2, . . . , x_m)$ by $g = -\dif x^{2}_{m} + \dif x^{2}_{1} + . . . + \dif x^{2}_{m-1}$ (so $x_m$ denote the time and $x_i$, $i=1,..., m-1$ the spatial directions). By ($m$-dimensional) \textit{spacetime} we shall mean a connected time-oriented Lorentz manifold.

Recall that a smooth distribution $\VV$ of dimension $q$ on $M$ is a smooth rank $q$ subbundle of the tangent bundle $TM$. Assuming that $\VV$ is non-degenerate, the metric $g$ gives rise to a splitting $TM = \VV \oplus \HH$ and one can define the \textit{mean curvature of} $\VV$ by
\begin{equation}
\mu^{\VV}=\frac{1}{q}\sum_{i=1}^{q}\varepsilon_i \left(\nabla_{e_i}e_i\right)^{\HH} ,
\end{equation}
where $\nabla$ is the Levi-Civita connection, $\{e_i \}$ is a (local moving) frame for $\VV$, $\varepsilon_i = g(e_i, e_i) = \pm 1$ and $X^\HH$ denotes the orthogonal projection on $\HH$ of a vector field $X$ on $M$. If $\VV$ is closed under the Lie bracket, then it is called \textit{integrable} distribution and, by Frobenius' theorem, it gives rise to a smooth foliation of $M$. The foliation / distribution $\VV$ is said to be \textit{extremal} if $\mu^{\VV}=0$ (the terminology is inspired by the (local) "area" maximization property of zero mean curvature spacelike hypersurfaces, among all nearby hypersurfaces having the same boundary \cite{fra}. In the Riemannian case the same condition assures the minimality in a similar variational problem, see e.g. \cite{ura}).

\medskip

Now let us recall a slightly simplified picture of perfect fluids, following \cite[Chap. 12]{one}.
\begin{de}\label{def}
A \emph{perfect fluid} on a spacetime $M$ is a triple $(U, \rho, p)$ such that:
\begin{enumerate}
\item[(i)] $U$ is a timelike future-pointing unit vector field on $M$, called the \emph{flow vector field}.

\item[(ii)] $\rho, p:M\to \RR$ are the \emph{mass (energy) density} and \emph{pressure}, respectively.

\item[(iii)] $\di \Tt =0$, where $\Tt$ is the \emph{stress-energy tensor} given by:
\begin{equation}\label{perfstress}
\Tt=p \, g + (p + \rho)\omega \otimes \omega ,
\end{equation}
with $\omega=U^{\flat}$ the 1-form metrically equivalent to $U$.
\end{enumerate}
\end{de}

\begin{pr}\label{perfeq}
If $(U, \rho, p)$ is a perfect fluid, then the following relations hold:

\medskip
$(i)$ \ $(\rho + p)\di U + U(\rho)=0$ (\emph{conservation of energy along flow lines})

\medskip
$(ii)$ \ $(\rho+p)\nabla_{U}U + \gr^{\perp}p=0$ (\emph{Euler equations}),

\medskip
\noindent where $\gr^{\perp}p$ (the \emph{spatial pressure gradient}) denotes the component orthogonal to $U$.
\end{pr}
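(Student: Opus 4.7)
The plan is to compute $\di \Tt$ directly from the closed-form expression \eqref{perfstress}, set it to zero, and then decompose the resulting $1$-form equation along $U$ and along the orthogonal distribution; the projection along $U$ should yield (i), and after using (i) to simplify, the orthogonal projection should yield (ii).

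First I would expand $\di \Tt$ via the Leibniz rule, using $\nabla g = 0$. The pressure term contributes $\dif p$, while $\di\bigl[(p+\rho)\omega\otimes\omega\bigr]$ produces three pieces coming from differentiating the scalar coefficient along $U$, from tracing $\nabla\omega$ (which yields $\di U$), and from differentiating the remaining factor of $\omega$ (which gives a contribution proportional to $(\nabla_U U)^\flat$). Collecting these, one arrives at a master equation of the form
\[
\dif p + \bigl[U(p+\rho) + (p+\rho)\di U\bigr]\omega + (p+\rho)(\nabla_U U)^{\flat} = 0.
\]
This is the identity to be decomposed.

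To obtain (i), I would evaluate the master equation at $U$. Two facts do all the work: the unit-timelike condition $g(U,U) = -1$, which gives $\omega(U) = -1$, and its immediate consequence $g(\nabla_U U, U) = \tfrac{1}{2}\,U\bigl(g(U,U)\bigr) = 0$, which kills the $\nabla_U U$ contribution. A one-line cancellation of $U(p)$ then reduces the identity to $(\rho+p)\di U + U(\rho) = 0$.

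For (ii) I would substitute (i) back into the master equation: using $U(p+\rho)+(p+\rho)\di U = U(p)$, the bracketed coefficient of $\omega$ collapses to $U(p)$. Raising the index, the equation becomes $\gr p + U(p)\,U + (p+\rho)\nabla_U U = 0$. The final step uses the Lorentzian orthogonal decomposition $\gr p = \gr^\perp p - U(p)\,U$ (the sign differs from the Riemannian case because $g(U,U) = -1$), whence $\gr p + U(p)\,U = \gr^\perp p$, and (ii) follows. No real obstacle beyond sign bookkeeping intervenes; the only point needing care is precisely this Lorentzian projection formula and the companion identity $g(\nabla_U U, U)=0$.
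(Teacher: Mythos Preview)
Your argument is correct; the master identity and the two projections are exactly right, and the sign bookkeeping (in particular $\omega(U)=-1$ and $\gr p = \gr^{\perp}p - U(p)\,U$) is handled properly. The paper itself does not supply a proof of this proposition: it is stated as a recalled fact from O'Neill's textbook (Chapter~12), so there is no in-paper argument to compare against. Your direct expansion of $\di\Tt$ followed by decomposition along $U$ and $U^{\perp}$ is precisely the standard proof one finds there.
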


In what follows we restrict ourself to \textit{barotropic fluids}, i.e. we assume the \textit{equation of state} (EoS): $\rho=\rho(p)$. In this case, a third equation governing the fluid (the \textit{particle number conservation}) decouples and therefore it is enough to work with the above definition.

In this case it is useful to introduce the \textit{index of the fluid} \cite{choq} by:
$\displaystyle f(p) := \mathrm{exp} \int \frac{\dif p}{\rho(p) + p}$.
For reasons that will become clear below, we shall denote the foliation defined by $U$ with $\VV$ calling it \textit{vertical} and its complementary with $\HH$, calling it \textit{horizontal} distribution. Their mean curvatures are in this case given by:
\begin{equation}\label{mcurv}
\mu^{\VV} = -\nabla_{U}U, \qquad \mu^{\HH} = \frac{\di U}{m-1}U.
\end{equation}
The fluid's equations above rewrite as follows:
\begin{equation}\label{curvgrad}
\mu^{\HH} = - \gr^{\VV}(\ln (f^{\prime})^{\frac{1}{m-1}}); \qquad
\mu^{\VV} = \gr^{\HH}(\ln f).
\end{equation} 

\begin{ex}[Radiation case]
For instance, considering radiation's EoS, $\rho=3p$ and $m=4$, we can resume fluid's equations in
$$\mu^{\HH}+\mu^{\VV} = \gr(\ln f),$$ with $f=p^{1/4}$. The fact that $\mu^{\HH}+\mu^{\VV}$ is of gradient type means, geometrically, that both $\VV$ and $\HH$ are \textit{extremal} (i.e. $\mu^{\HH}=0$ and $\mu^{\VV}=0$) with respect to the metric $\ov g = f^2 g$ conformal to the spacetime metric. In particular, the flow lines are geodesics of $\ov g$, cf. \cite[Theorem 10.1]{choq}.
\end{ex}

\section{Perfect fluids from sigma-models}

\subsection{The Cauchy-Green tensor}
Let $\varphi: (M^{m}, g) \to (N^{n}, h)$ a $C^1$ map defined on a spacetime with values in a Riemannian manifold.
The pullback metric $\varphi^{*}h$ is a 2-covariant tensor field on $M$. If we define $\dif \varphi^{t}:TN \rightarrow TM$ the formal adjoint of $\dif \varphi$ with respect to $g$ and $h$ in the usual way, then we can think of $\varphi^{*}h$ as an endomorphism $\Cc _{\varphi} = \dif \varphi^{t} \circ \dif \varphi: TM \rightarrow TM$ called the \textit{Cauchy-Green (strain) tensor} of $\varphi$, analogously to the case of deformations in non-linear elasticity.  It is self-adjoint by construction, but due to the Lorentzian signature of the metric, it may have complex eigenvalues. We shall eliminate this possibility by asking our maps to have timelike (so non-degenerate) vertical distribution $\VV=\ker \dif \varphi$.  

For the convenience of the reader we present the following basic linear algebraic fact by specifying the dimensions to the case of interest for us (for general facts about self-adjoint linear operators on Lorentz vector spaces, see \cite[p. 243, 261]{one}).
\begin{lm} \label{alg}
Let $F: V \to W$ be a linear operator of rank $3$ from a Lorentz vector space $V \approx \RR_{1}^{4}$ to an Euclidean vector space $W \approx \RR^{3}$.
On $V$, consider the self-adjoint operator $\Cc := F^{t} \circ F$, where $F^t: W \to V$ is defined by: $\langle Fv, w\rangle _{Euclid} = \langle v, F^t w\rangle _{Minkowski}$, $\forall v \in V, w \in W$. 

If $\langle v, v \rangle _{Minkowski} < 0, \forall v \in \ker F$, then the eigenvalues of $\Cc$ are (real and) non-negative and $\Cc$ can be diagonalized in an orthonormal basis (of eigenvectors).
\end{lm}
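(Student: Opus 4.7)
The plan is to exploit the rank condition to reduce the problem to a purely Euclidean one. Since $F$ has rank $3$ and $\dim V = 4$, the subspace $\Ker F$ is $1$-dimensional. By hypothesis every nonzero element of $\Ker F$ is timelike, so $\Ker F$ is a timelike line and hence non-degenerate. This is the only place the sign hypothesis is used, and it is the crux: a non-degenerate line admits an orthogonal complement that is complementary, and its orthogonal complement inherits a definite (in fact positive-definite) form.

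First I would set $\HH := (\Ker F)^{\perp}$. By non-degeneracy of $\Ker F$, one has $V = \Ker F \oplus \HH$ as an orthogonal direct sum, $\dim \HH = 3$, and the restriction of the Minkowski form to $\HH$ is Euclidean. Next I would check that $\Cc$ preserves this splitting. Clearly $\Cc$ vanishes on $\Ker F$, so $\Ker F$ is $\Cc$-invariant. For $v \in \HH$ and $u \in \Ker F$,
\begin{equation*}
\langle \Cc v, u\rangle_{\text{Mink}} = \langle v, \Cc u\rangle_{\text{Mink}} = 0,
\end{equation*}
using that $\Cc$ is self-adjoint on $V$; hence $\Cc v \in \HH$ as well.

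Then the problem reduces to the restriction $\Cc|_{\HH}$, which is a self-adjoint operator on the Euclidean $3$-space $(\HH, \langle \cdot , \cdot\rangle_{\text{Mink}}|_{\HH})$. By the classical spectral theorem in the Euclidean case, $\Cc|_{\HH}$ is diagonalizable in an orthonormal basis $\{e_1, e_2, e_3\}$ with real eigenvalues $\lambda_i$. These eigenvalues are non-negative because
\begin{equation*}
\lambda_i = \langle \Cc e_i, e_i\rangle_{\text{Mink}} = \langle F e_i, F e_i\rangle_{\text{Eucl}} \geq 0.
\end{equation*}
Finally, pick any $e_0 \in \Ker F$ with $\langle e_0, e_0\rangle_{\text{Mink}} = -1$; then $\{e_0, e_1, e_2, e_3\}$ is an orthonormal basis of $V$ consisting of eigenvectors of $\Cc$ (with eigenvalue $0$ at $e_0$), which gives the claim. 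I do not expect a genuine obstacle: the whole content is the observation that the timelike hypothesis on $\Ker F$ makes its orthogonal complement Euclidean, after which everything is the standard spectral theorem.
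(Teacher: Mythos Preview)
Your proof is correct and takes a genuinely different route from the paper's. The paper works in coordinates: it writes the matrix of $\Cc$ as a pseudo-symmetric matrix $\mathcal{F}^{\diamond}\mathcal{F}$, picks a complex eigenvector $Z$, and runs the Hermitian-style conjugation trick (multiply by $\mathrm{diag}(-\ov z_1,\ov z_2,\ov z_3,\ov z_4)$ and sum) to force the eigenvalue to be real unless the Minkowski quadratic form vanishes on $Z$; that lightlike case is then excluded via the timelike-kernel hypothesis. Your argument bypasses complexification entirely: since $\Ker F$ is a timelike, hence non-degenerate, line and is $\Cc$-invariant, its orthogonal complement $\HH$ is spacelike and $\Cc$-invariant, so the whole statement reduces to the ordinary Euclidean spectral theorem on $(\HH,\langle\cdot,\cdot\rangle_{\mathrm{Mink}}|_{\HH})$, with non-negativity coming from $\langle \Cc e_i,e_i\rangle_{\mathrm{Mink}}=\langle F e_i,F e_i\rangle_{\mathrm{Eucl}}\ge 0$. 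Your approach is shorter and more geometric; the paper's has the flavor of adapting the standard Hermitian proof to indefinite signature, which makes the role of the lightlike obstruction explicit but is heavier for this particular lemma.
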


\begin{proof}
Let $\mathcal{F} = \left(f_{ij} \right)_{i=1,2,3}^{j=1,2,3,4}$ be the associated (real) matrix of $F$, with respect to some orthonormal frames. Then the associated matrix of the adjoint  $F^t$ is the \emph{pseudo-transposed} matrix
$\mathcal{F}^{\diamond}=\left(\begin{array}{ccc}
-f_{11} & -f_{21} & -f_{31}\\
f_{12} & f_{22} & f_{32} \\
f_{13} & f_{23} & f_{33} \\
f_{14} & f_{24} & f_{34}
\end{array} \right)$. The associated matrix of the composition is a \textit{pseudo-symmetric} matrix of the form
$\mathcal{F}^{\diamond}\cdot \mathcal{F}=\left(\begin{array}{cc}
\alpha & \varpi\\
-\varpi^t & A \\
\end{array} \right)$, where $\alpha \in \RR$, $A$ is a symmetric matrix and $\varpi$ a row 3-vector. Now let us apply a standard argument similar to the Euclidean case. Take $Z =(z_i)_{1\leq i \leq 4} \in \CC^4$ a solution of the system $\mathcal{S}: (\mathcal{F}^{\diamond}\cdot \mathcal{F} - \lambda \mathbb{I}_4)Z=0$ and multiply it on the left with the diagonal matrix $\mathrm{diag}(-\ov z_1, \ov z_2, \ov z_3, \ov z_4)$. Summing the new equations provides us with a linear equation in $\lambda$:  \ $\lambda(- z_1 \ov z_1 + \sum_{i=2}^{4} z_i \ov z_i) +$ (real terms) $=0$. So $\lambda$ will be real, unless we have some complex lightlike solution of $\mathcal{S}$. But if it exists such solution $Z = X+ \mathrm{i} Y$, then $\langle X , X \rangle + \langle Y , Y \rangle =0$ and $X$ and $Y$ are in the 1-dimensional kernel of $F$. So $X$ and $Y$ must be colinear and lightlike, that contradicts our hypothesis.

The proof of the second statement is similar to the Euclidean case.
\end{proof}

So, at a point $x$ of $M$, if $\ker \dif \varphi_x$ is timelike, we saw that there exists an orthonormal basis of eigenvectors of $\Cc_{\varphi}$ for $T_x M$. Extending \cite[Lemma 2.3]{pant} in this context, we get a \textit{local} orthonormal frame of eigenvector fields, around any point where the timelike condition above is satisfied (i.e. $\Cc_{\varphi}$ can be \textit{consistently diagonalized}).

In general, let $\Lambda_{1}$, $\Lambda_{2}$, ..., $\Lambda_{r}$ and $\Lambda_{r+1} = ... = \Lambda_{m}=0$
be the eigenvalues of $\Cc_{\varphi}$, where $r := \mathrm{rank} (\dif \varphi)$ everywhere. Whenever they are all real, the corresponding elementary symmetric functions, $\sigma_k(\varphi)$, will play a special role, due to the fact that
$$
\sigma_1(\varphi)= \abs{\dif \varphi}^2; \qquad
\sigma_k(\varphi) = \abs{\wedge^k \dif \varphi}^2; \quad . . . \ ;
\quad \sigma_m(\varphi)= \mathrm{det}(\varphi^* h),
$$
where $\wedge^k \dif \varphi$ denotes the induced map on $k$-vectors and in the right-hand side of every identity we have the usual Hilbert Schmidt square norm induced by $g$ and $h$ (that may be negative, despite the squared notation).

\subsection{High power Lagrangians and their associated stress-energy tensor}

For a $C^1$ map $\varphi: (M, g) \to (N, h)$ between (semi-) Riemannian manifolds let us recall some Lagrangians and their action integral $\mathcal{E}$ (that will be called \textit{energy} to recover the Riemannian geometric terminology). The associated stress-energy tensor $S(\varphi)$ is defined by the first variation formula with respect to the domain metric variations: 
\begin{equation*}
\frac{d}{d u} \Bigl\lvert _{u=0}\mathcal{E}(\varphi, g_u)=\int_M \langle S(\varphi),\ \delta g \rangle \nu_g.
\end{equation*}

\begin{enumerate}
\item[$\bullet$] (cf. \cite{bg}) $k$-\textbf{energy} of $\varphi$: $\mathcal{E}_k(\varphi) = \int_{M} e_k(\varphi)\nu_{g}$, where $e_k(\varphi)=\frac{1}{k}\abs{\dif \varphi}^k$, \ $k \geq 2$.

$k$-\textbf{stress-energy tensor}:
\begin{equation}
S_k({\varphi}):= e_k(\varphi)g - \abs{\dif \varphi}^{k-2}\varphi^* h .
\end{equation}
Adapting the proof in the Riemannian case, we easily get the identity:
\begin{equation}\label{ide}
h(\tau_k(\varphi), \dif \varphi)=-\di S_k({\varphi}) ,
\end{equation}
where $\tau_k(\varphi)=\tr_{g} \nabla (\abs{\dif \varphi}^{k-2}\dif \varphi)$ is the \textit{k-tension field} of $\varphi$. For submersions, \eqref{ide} tells us that $S_k({\varphi})$ is divergence free if and only if $\varphi$ is $k$-\textit{harmonic}, i.e. a stationary point of $\mathcal{E}_k(\cdot)$ with respect to smooth variations of $\varphi$ on any compact domain $K$ ($\frac{d}{d s} \vert _{s=0}\mathcal{E}_{k}(\varphi_s)=0$). A $2$-harmonic map is called simply \textit{harmonic map} or \textit{wave map} if $M$ is Lorentzian.

\item[$\bullet$] (cf. \cite{cri}) $\sigma_k$-\textbf{energy} of $\varphi$: $\mathcal{E}_{\sigma_k}(\varphi)=
\frac{1}{2}\int_{M}\sigma_k(\varphi)\nu_{g}$, where $\sigma_k(\varphi)=\abs{\wedge^k \dif \varphi}^2$, \ $k\geq 1$.

$\sigma_k$-\textbf{stress-energy tensor}: 
\begin{equation}\label{stres3}
S_{\sigma_k}(\varphi)=\frac{1}{2}\sigma_k(\varphi)g - \varphi^* h \circ \chi_{k-1}(\varphi),
\end{equation}  
where $\chi_q$ denotes the $q^{th}$ Newton tensor. Again, for submersions we can prove that $S_{\sigma_k}({\varphi})$ is divergence free if and only if $\varphi$ is $\sigma_k$-\textit{critical}, i.e. $\frac{d}{d s} \vert _{s=0}\mathcal{E}_{\sigma_k}(\varphi_s)=0$ on any compact domain $K$, or equivalently $\tau_{\sigma_k}(\varphi)=0$, where $\tau_{\sigma_k}(\varphi)=\tr_{g} \nabla (\dif \varphi \circ \chi_{k-1}(\varphi))$ is called the $\sigma_k$-\textit{tension field} of $\varphi$.
\end{enumerate}

Notice that $\frac{1}{2}\sigma_1(\varphi) = e_2(\varphi)$ is the \textit{standard kinetic term} and $\sigma_2(\varphi)$ coincides with the (self-interacting) \textit{Skyrme term} (for fields with target $N$). The stress-energy tensors of these two cases appear explicitly in \cite{gib} where one shows that they satisfy the dominant energy condition
\footnote{In our setting the stress-energy has the opposite sign with respect to the physics literature where it is defined as a variational derivative for the Lagrangian density relative to the \textit{inverse} metric.}. 

It is worth to emphasize that the identities (\ref{ide}) (consequences of the diffeomorphism invariance of the action) implies that the stress-energy tensor is always conserved along vertical directions (i.e. $[\di S(\varphi)](V)=0, \forall V \in \mathrm{Ker} \, \dif \varphi$) for \textit{any} map $\varphi$, not necessarily for critical maps. In particular, for $\sigma_3$-energy we have the following general result, whose independent proof can be find in \cite[Prop. 3]{slub}.

\begin{lm}\label{diu}
Consider a submersion $\varphi : (M^{4}, g) \to (N^3, h)$ between (semi) Riemannian manifolds such that $\sigma_3(\varphi)=\mathrm{det}(\varphi^*h) \geq 0$ on an open set. Let $U$ be an unit vertical vector field on this open set. Then the following identity holds:
\begin{equation}\label{titi}
\di U + U(\ln \sqrt{\sigma_3(\varphi)})=0.
\end{equation}
\end{lm}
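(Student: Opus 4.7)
The plan is to exploit the conservation identity noted just before the lemma: for \emph{any} map $\varphi$, $(\di S_{\sigma_k})(V) = 0$ for every $V \in \Ker \dif\varphi$, as a consequence of the diffeomorphism invariance of the action. Applied to $k=3$ and $V = U$, this should yield \eqref{titi} once $S_{\sigma_3}(\varphi)$ is expressed in sufficiently concrete form.

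First I would make $S_{\sigma_3}(\varphi)$ explicit. The hypothesis $\sigma_3(\varphi) \geq 0$ (in fact $>0$ where the statement is non-vacuous) combined with Lemma~\ref{alg} and its local extension produces a $g$-orthonormal frame $\{U, e_1, e_2, e_3\}$ diagonalising $\Cc_\varphi$ with eigenvalues $0, \Lambda_1, \Lambda_2, \Lambda_3$ and $\Lambda_1\Lambda_2\Lambda_3 = \sigma_3(\varphi)$. Writing $A := \Cc_\varphi|_\HH$, the Cayley--Hamilton identity applied to the $3 \times 3$ operator $A$ gives $A \circ \chi_2(A) = \sigma_3(\varphi)\,\mathrm{Id}_\HH$; together with the vanishing of $\varphi^*h$ whenever one of its arguments is vertical, this shows
\begin{equation*}
\varphi^*h \circ \chi_2(\varphi) = \sigma_3(\varphi)\,(g + \omega \otimes \omega), \qquad \omega := U^\flat,
\end{equation*}
since $g + \omega \otimes \omega$ is precisely the horizontal projector metric (zero on $\VV$, equal to $g$ on $\HH$). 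Substituting into \eqref{stres3} yields the compact form
\begin{equation*}
S_{\sigma_3}(\varphi) = -\tfrac{1}{2}\sigma_3(\varphi)\,g - \sigma_3(\varphi)\,\omega \otimes \omega.
\end{equation*}

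Next I would compute $(\di S_{\sigma_3})(U)$ directly. Using $\di(fg) = \dif f$ and the routine trace identity $\di(\omega\otimes\omega)(X) = \omega(X)\,\di U + (\nabla_U \omega)(X)$, one obtains an explicit formula for $(\di S_{\sigma_3})(X)$. Specialising $X = U$ and using $\omega(U) = -1$ together with $(\nabla_U\omega)(U) = \tfrac{1}{2}U(g(U,U)) = 0$ --- valid precisely because $U$ is a unit vector --- the conservation law $(\di S_{\sigma_3})(U) = 0$ collapses to $\tfrac{1}{2}U(\sigma_3) + \sigma_3\,\di U = 0$, which, after dividing by $\sigma_3$, is exactly \eqref{titi}.

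The main obstacle is the first step: recognising that $\varphi^*h \circ \chi_2(\varphi)$ simplifies to $\sigma_3(\varphi)(g + \omega\otimes\omega)$. This algebraic identity requires both the Cayley--Hamilton relation in the $3$-dimensional horizontal block and the pointwise diagonalisation of $\Cc_\varphi$ guaranteed by Lemma~\ref{alg}, so that $\HH$ and $\VV$ decouple orthogonally. Once this reduction is in place the remaining steps are a routine divergence computation, and the unit-norm hypothesis on $U$ is precisely what kills the otherwise surviving $(\nabla_U\omega)(U)$ term.
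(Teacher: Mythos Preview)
Your argument is correct and follows precisely the route the paper indicates: the paper does not spell out a proof but notes just before the lemma that it is a consequence of the vertical conservation identity $[\di S_{\sigma_3}(\varphi)](V)=0$ for all $V\in\Ker\dif\varphi$, and refers to \cite[Prop.~3]{slub} for an independent argument. Your derivation of $S_{\sigma_3}(\varphi)=-\tfrac{1}{2}\sigma_3\,g-\sigma_3\,\omega\otimes\omega$ via Cayley--Hamilton on the horizontal block, followed by the explicit divergence evaluated on $U$, is exactly this mechanism made explicit. One small remark: your identification $g+\omega\otimes\omega=g^{\HH}$ uses $g(U,U)=-1$, whereas the lemma is stated for general (semi-)Riemannian signatures; in the Riemannian case the sign in front of $\omega\otimes\omega$ flips, but so does $\omega(U)$, and the final identity \eqref{titi} comes out the same.
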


\subsection{The $\sigma_{3}$-critical field / stiff perfect fluid correspondence}

In the following we identify a first case when the stress-energy tensor of a $\sigma$-model is in the perfect fluid form (see also \cite{bei, cristo, langl}). This happens also for the stress-energy of a \textit{scalar} field (kinetic or higher power) Lagrangian with potential, but in this case $U$ is constrained to be irrotational, see e.g. \cite{muk, di}.

\begin{pr}\label{stif}
Let $\varphi : (M^{4}, g) \to (N^3, h)$ be a submersion from a Lorentzian manifold onto a Riemannian manifold. Suppose that the fibers are timelike and take $U$ the unit vector field that spans the vertical space $\VV = \ker \dif \varphi$. Denote by $\HH = \VV^{\perp}$ its complementary distribution and let $g = g^{\HH} - g^{\VV}$ be the induced splitting of the metric tensor, with $g^{\VV}=\omega \otimes \omega$, $\omega(X):= g(U, X)$, $\forall X \in \Gamma(TM)$.  Then the following statements hold good:

$(i)$ \ The eigenvalues of the Cauchy-Green tensor $\Cc_\varphi$ with respect to $g$ are all positive: $\Lambda_i=\lambda_{i}^{2}$, $i=1,2,3$, $\Lambda_4=0$.

$(ii)$  $\sigma_3$-stress-energy tensor of $\varphi$ has the perfect fluid form 
\begin{equation}\label{s3pf}
-2S_{\sigma_3}(\varphi) = \Tt = p g^{\HH} + \rho g^{\VV},
\end{equation}
with the \emph{stiff matter} EoS: $p=\rho=\lambda_{1}^{2}\lambda_{2}^{2}\lambda_{3}^{2}$.

$(iii)$ \ $\varphi$ is $\sigma_3$-critical if and only if
\begin{equation}\label{s3}
-\gr^{\HH}(\ln \lambda_{1}\lambda_{2}\lambda_{3}) + \mu^{\VV}=0.
\end{equation} 
that is if and only if $(U, \rho, p)$ satisfies the relativistic Euler equation for the EoS \\ $p=\rho=\lambda_{1}^{2}\lambda_{2}^{2}\lambda_{3}^{2}$. 

$(iv)$ The following identity is true
\begin{equation}\label{s3+}
3\mu^{\HH}=\gr^{\VV}(\ln \lambda_{1}\lambda_{2}\lambda_{3}).
\end{equation} 
and it is equivalent to the conservation of energy along flow lines for $(U, \rho, p)$ with \\ $p=\rho=\lambda_{1}^{2}\lambda_{2}^{2}\lambda_{3}^{2}$.

$(v)$ The fluid equations taken together are equivalent to: 
\begin{equation*}
3\mu^{\HH} + \mu^{\VV} \quad \mathrm{is \ of \ gradient \ type}.
\end{equation*}
In particular after a biconformal rescaling of the metric both $\VV$ and $\HH$ become extremal.

$(vi)$ \ The equation (\ref{s3}) is invariant under biconformal changes of metric of the following type:
\begin{equation}\label{bico}
\ov g = \varsigma^{-2}g^{\HH} - \varsigma^{-6} g^{\VV}.
\end{equation}  
In particular, if $\varphi$ is $\sigma_3$-critical, then $(\varsigma^{3} U, \varsigma^{6}\rho, \varsigma^{6}p)$ is a stiff perfect fluid on $(M, \ov g)$.
\end{pr}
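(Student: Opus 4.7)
The plan is to treat the algebraic content of (i)--(ii) first and then derive the dynamical statements (iii)--(v) as consequences of the general divergence identity \eqref{ide} together with Lemma \ref{diu}; part (vi) will be a separate biconformal tracking.

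For (i), Lemma \ref{alg} applies pointwise: the timelike-fibers assumption makes $\ker\dif\varphi_x$ negative-definite, so $\Cc_\varphi$ diagonalizes in an orthonormal basis with real non-negative eigenvalues. One of them is zero and is attached to $U\in\ker\dif\varphi$, while the other three are strictly positive because $\varphi$ has maximal rank. The content of (ii) lies in identifying $\varphi^{*}h\circ\chi_{2}(\varphi)$. Working in the eigenbasis $\{e_1,e_2,e_3,U\}$ and using Cayley--Hamilton for $\Cc_\varphi$ (with $\sigma_4=0$ the characteristic polynomial reads $\Lambda^{3}-\sigma_1\Lambda^{2}+\sigma_2\Lambda-\sigma_3=0$ on nonzero eigenvalues), one obtains $\chi_2(\varphi)\,e_i=(\sigma_3/\Lambda_i)e_i$ and hence $\Cc_\varphi\chi_2(\varphi)\,e_i=\sigma_3\,e_i$, while the same operator annihilates $U$. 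As a $(0,2)$-tensor this reads $\varphi^{*}h\circ\chi_2(\varphi)=\sigma_3(\varphi)\,g^{\HH}$; substituting into \eqref{stres3} together with $g=g^{\HH}-g^{\VV}$ yields $-2S_{\sigma_3}(\varphi)=\sigma_3(g^{\HH}+g^{\VV})$, which is \eqref{s3pf} with $p=\rho=\sigma_3=\lambda_1^{2}\lambda_2^{2}\lambda_3^{2}$.

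For (iv), Lemma \ref{diu} provides $\di U+U(\ln\sqrt{\sigma_3})=0$; combined with $\mu^{\HH}=\frac{\di U}{3}U$ and the identity $\gr^{\VV}f=-U(f)U$ (valid because $g(U,U)=-1$), this is exactly \eqref{s3+}, and rewriting it with $p=\rho=\sigma_3$ recovers Proposition \ref{perfeq}(i). Part (iii) follows from the fact, recalled before Lemma \ref{diu}, that the vertical part of $\di S(\varphi)$ vanishes identically for any map, while for submersions $\di S_{\sigma_3}(\varphi)=0$ is equivalent to $\sigma_3$-criticality. By (ii) this reduces to the horizontal part of $\di\Tt=0$, i.e.\ to Proposition \ref{perfeq}(ii) specialized to $p=\rho=\sigma_3$, namely $2\sigma_3\nabla_U U+\gr^{\HH}\sigma_3=0$, which is \eqref{s3}. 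Part (v) is then obtained by adding \eqref{s3} and \eqref{s3+}: $\mu^{\VV}+3\mu^{\HH}=\gr(\ln\sqrt{\sigma_3})$, manifestly of gradient type. The biconformal consequence follows from the standard transformation laws of $\mu^{\VV}$ and $\mu^{\HH}$ under a change $\ov g=\alpha^2 g^{\HH}+\beta^2 g^{\VV}$, choosing $\alpha,\beta$ to absorb the respective horizontal and vertical gradient pieces.

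For (vi), I would track every ingredient of \eqref{s3} under the biconformal change $\ov g=\varsigma^{-2}g^{\HH}-\varsigma^{-6}g^{\VV}$. The horizontal rescaling multiplies $\dif\varphi^{t}$ by $\varsigma^{2}$ on $\HH$, hence $\Cc_\varphi$ by $\varsigma^{2}$, so $\ov\lambda_i=\varsigma\lambda_i$ and $\ov\lambda_1\ov\lambda_2\ov\lambda_3=\varsigma^{3}\lambda_1\lambda_2\lambda_3$; the unit vertical vector in $\ov g$ is $\varsigma^{3}U$. Using the Koszul formula (equivalently, the standard biconformal change-of-connection identities) one computes $\ov\mu^{\VV}$ and the new horizontal gradient, and checks that the extra terms produced by the two exponents $-2$ and $-6$ cancel, leaving \eqref{s3} invariant. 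The last assertion follows because $\rho=p=\sigma_3$ then scales by $\varsigma^{6}$. The main obstacle will be precisely this tracking in (vi): one has to see that the specific pair of exponents $-2$ and $-6$ (and no other) makes the non-trivial biconformal contributions cancel out; the algebraic step in (ii) is routine once one is in the eigenbasis.
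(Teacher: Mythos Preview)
Your proposal is correct and follows essentially the same route as the paper: Lemma \ref{alg} for (i), an eigenbasis computation for (ii), Lemma \ref{diu} for (iv), the divergence of \eqref{s3pf} for (iii), and the biconformal transformation laws \eqref{curvbicon} for (v)--(vi). The one cosmetic difference is that for (ii) you invoke the Newton-tensor/Cayley--Hamilton identity $\Cc_\varphi\chi_2=\sigma_3\cdot\mathrm{pr}^{\HH}$ in one stroke, whereas the paper expands $\chi_2(\varphi)=\sigma_2\,Id-\sigma_1\Cc_\varphi+\Cc_\varphi^{2}$ and simplifies term by term on each eigenspace---your packaging is a bit cleaner but the computation is the same.
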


\begin{proof}
$(i)$ Consequence of Lemma \ref{alg}.

$(ii)$ \ Recall \cite{cri} that the second Newton tensor is given by:
\begin{equation*}
\chi_2(\varphi) = \sigma_2(\varphi)Id_{TM} - \Cc_{\varphi} \circ \chi_1(\varphi); \quad \chi_1(\varphi) = 2e_2(\varphi)Id_{TM} - \Cc_{\varphi},
\end{equation*}
where $\Cc_{\varphi}=\dif \varphi^t \circ \dif \varphi$ is the Cauchy-Green tensor. Let $\HH=\HH_1 \oplus \HH_2 \oplus \HH_3$ be the splitting of the horizontal distribution in eigenspaces of $\Cc_{\varphi}$. Take $X_1\in \Gamma(\HH_1)$ and let us compute the $\sigma_3$-stress-energy (\ref{stres3}) on it: 
\begin{equation*}
\begin{split}
&S_{\sigma_3}(\varphi)(X_1, Y)= \frac{1}{2}\sigma_3(\varphi)g(X_1, Y) - \varphi^* h(\chi_2(\varphi)(X_1), Y)\\
&= \frac{1}{2}\lambda_{1}^{2}\lambda_{2}^{2}\lambda_{3}^{2} g(X_1, Y) - \varphi^* h(\sigma_2(\varphi)X_1 -2e_2(\varphi)\Cc_{\varphi}(X_1)+\Cc_{\varphi}^{2}(X_1), Y)\\
&= \frac{1}{2}\lambda_{1}^{2}\lambda_{2}^{2}\lambda_{3}^{2} g(X_1, Y) - \varphi^* h(\left[\lambda_{1}^{2}\lambda_{2}^{2}+\lambda_{2}^{2}\lambda_{3}^{2}+
\lambda_{3}^{2}\lambda_{1}^{2} - (\lambda_{1}^{2}+\lambda_{2}^{2}+\lambda_{3}^{2})\lambda_{1}^{2}+
\lambda_{1}^{4}\right]X_1, Y)\\
&= \lambda_{2}^{2}\lambda_{3}^{2} \left(\frac{\lambda_{1}^{2}}{2}g^{\HH}(X_1, Y) - \varphi^* h(X_1, Y)\right)
=-\frac{1}{2}\lambda_{1}^{2}\lambda_{2}^{2}\lambda_{3}^{2} g^{\HH}(X_1, Y).
\end{split}
\end{equation*}

A similar identity holds for the other two eigenspaces. 

Since for any $V \in \Gamma(\VV)$ we have
\begin{equation*}
\begin{split}
S_{\sigma_3}(\varphi)(V, Y) &= \frac{1}{2}\sigma_3(\varphi)g(V, Y) - \varphi^* h(\chi_2(\varphi)(V), Y)= -\frac{1}{2}\lambda_{1}^{2}\lambda_{2}^{2}\lambda_{3}^{2}g^{\VV}(V, Y),
\end{split}
\end{equation*}
the conclusion follows. 

$(iii)$ Rewrite Euler equations from Proposition 2.1 in this context or simply take the divergence in (\ref{s3pf}). 

$(iv)$ Apply Lemma \ref{diu}. Then interpret the mean curvature of $\HH$ as we have done in (\ref{mcurv}).

$(v)$ Under a general biconformal change of metric 
$\ov g = \varsigma^{-2}g^{\HH} - \varrho^{-2} g^{\VV}$, the mean curvatures of horizontal and vertical distributions become:
\begin{equation}\label{curvbicon}
\begin{split}
\ov \mu^{\HH} &= \varrho^{2} \left[\mu^{\HH}+ \gr^{\VV}(\ln \varsigma)\right]; \\
\ov \mu^{\VV} &= \varsigma^{2} \left[\mu^{\VV}+\gr^{\HH}(\ln \varrho)\right].
\end{split}
\end{equation}  
As in our case both $\mu^{\HH}$ and $\mu^{\VV}$ are of gradient type, they can be set to zero by such a biconformal change of metric.

$(vi)$ Using the second equation in (\ref{curvbicon}), we can rewrite Equation (\ref{s3}) with respect to a biconformally related metric $\ov g$:
\begin{equation*}
\mu^{\VV} - \gr^{\HH}(\ln \lambda_{1}\lambda_{2}\lambda_{3}) + \gr^{\HH}(\ln \varsigma^{-3}\varrho)=0.
\end{equation*}
and the conclusion follows.
\end{proof}

\begin{re}\label{remstif}
$(a)$ The above result can be easily generalized for any dimension $m$ of $M$, considering $\sigma_{m-1}$-critical submersion with timelike one-dimensional fibers. In particular, the strongly coupled Faddeev-Niemi model \cite{fad} on $\Mm^3$ will have solutions interpretable as stiff perfect fluids in (2+1) dimensions.

\noindent Notice that the condition for fibers to be timelike assures the dominant energy condition for the high power sigma-model considered (recall our different sign convention for the stress-energy tensor). For a more general statement see \cite{wo}.

\medskip
$(b)$ A computation similar to $(ii)$ in the above proof, applied to the other possible energies $e_k(\varphi)$ and $\sigma_k(\varphi)$, shows us that the stress-energy tensor does not have the perfect fluid form (see also \cite{gib}).

\medskip
$(c)$ Conversely, given a timelike unit vector field $U$, by locally integrating it becomes tangent to the fibres of a submersion onto some Riemannian manifold and the above result applies.
So locally we are able to translate the problem for the triple $(U, p, \rho)$ in terms of $(\varphi, (N,h))$, once we have fixed a metric $g$ on the spacetime $M$.

\medskip
$(d)$ If $\lambda_{1}^{2}=\lambda_{2}^{2}=\lambda_{3}^{2}=\lambda^2$, then $\varphi$ is called \emph{horizontally conformal} \cite{ud} and the vertical unit vector $U$ is shear-free. In this case $(U, \rho, p)$ is a stiff perfect fluid if and only if $\varphi$ is 6-harmonic (morphism). This condition translates into the following analogue of the \textit{fundamental equations} for harmonic morphisms:
\begin{equation*}
(n-6)\gr^{\HH}(\ln \lambda) + (m-n)\mu^{\VV}=0,
\end{equation*}
where $m=4$, $n=3$ in our case (see the subsection 3.5 below). 

The particular case of semi-Riemannian submersions (i.e. $\lambda=1$) with extremal fibers ($\mu^{\VV}=0$) will describe then fluids with constant pressure and mass density.

\medskip
$(e)$ (Conformal change of the codomain metric). \ Let $\widetilde h = \ov{\nu}^{2}h$, where $\ov{\nu}:N \to (0, \infty)$ is a smooth function. Set $\nu = \ov{\nu} \circ \varphi$. Then the eigenvalues of $\varphi ^* \widetilde h$ are $\widetilde \lambda_{i}^{2}= (\nu \lambda_{i})^{2}$. The equations (\ref{s3+}) leave unchanged, while (\ref{s3}) become:\\
$-3\gr^{\HH}(\ln \nu)-\gr^{\HH}(\ln \lambda_{1}\lambda_{2}\lambda_{3}) + \mu^{\VV} = 0$.
\end{re}

Now let us illustrate the above discussion with two extensions of the well-known Hwa-Bjorken solution in (1+1) dimensions \cite{bjo} (for a more elaborate treatment of the reduction techniques involved below, see \cite[Chap.13]{ud}).

\begin{ex}[\textbf{Extended Hwa-Bjorken flow (I)}]\label{bjo1}
Let $\RR^3$ be the Euclidean space parametrized by spherical coordinates and regard $\mathbb{M}^4$ as $\RR \times \RR^3$. Consider $\varphi : \mathbb{M}^4 \to \RR^3$, an $SO(3)$-equivariant  submersion of the form:
\begin{equation}\label{bjsubm}
\left(x_4, \ r(\cos s , \ \sin s \cdot e^{i\theta})\right) \mapsto
\alpha(x_4, r) \left(\cos s , \ \sin s \cdot e^{i \theta}\right),
\end{equation}
depending on a function $\alpha : \RR \times [0, \infty) \to [0, \infty)$. Then the eigenvalues of $\Cc_\varphi$ are:
\begin{equation*}
\lambda_{1}^{2}=\left(\frac{\partial \alpha}{\partial r}\right)^2 - \left(\frac{\partial \alpha}{\partial x_4}\right)^2, \quad
\lambda_{2}^{2}=\lambda_{3}^{2}=\frac{\alpha ^2}{r^2}, \quad
\lambda_{4}^{2}=0
\end{equation*}

with the associated eigenvector fields forming an orthonormal frame: 
\begin{equation*}
X_1 = \frac{1}{\lambda_{1}}\left(\frac{\partial \alpha}{\partial x_4} \frac{\partial}{\partial x_4} - \frac{\partial \alpha}{\partial r} \frac{\partial}{\partial r} \right), \ 
X_2=\frac{1}{r}\frac{\partial}{\partial s}, \ X_3 = \frac{1}{r \sin s}\frac{\partial}{\partial \theta}, \ 
U = \frac{1}{\lambda_{1}}\left(\frac{\partial \alpha}{\partial r} \frac{\partial}{\partial x_4} -\frac{\partial \alpha}{\partial x_4} \frac{\partial}{\partial r}\right).
\end{equation*} 

Notice that the condition for $\left(\frac{\partial \alpha}{\partial r}\right)^2 - \left(\frac{\partial \alpha}{\partial x_4}\right)^2$ to be positive on an open set translates both the condition of timelike fibers and the positivity of the first eigenvalue.

Writing $U = \cosh \Omega(x_4, r) \cdot \frac{\partial}{\partial x_4} + \sinh \Omega(x_4, r) \cdot \frac{\partial}{\partial r}$, the equations (\ref{s3}) reduce to one single PDE:
\begin{equation}\label{HBe}
\cosh \Omega \left(\frac{\partial \Omega}{\partial x_4} + \frac{\partial (\ln \lambda_{1}\lambda_{2}\lambda_{3})}{\partial r} \right) + \sinh \Omega \left(\frac{\partial (\ln \lambda_{1} \lambda_{2} \lambda_{3})}{\partial x_4} + \frac{\partial \Omega}{\partial r} \right)=0.
\end{equation} 

Let us now restrict $\varphi$ to the interior of the forward lightcone $\mathcal{U} = \{ x \in \mathbb{M}^4 \vert \abs{x_4} > \abs{r} \}$ and consider the ansatz $\alpha(x_4, r)= f(\zeta)$, $\zeta:=r/x_4$. The common unit vertical vector for all maps in this ansatz is the (normalized) \textit{scale transformation}:
\begin{equation}\label{HBflow}
U = \frac{x_4}{\sqrt{x_{4}^{2} - r^2}}\frac{\partial}{\partial x_4}+\frac{r}{\sqrt{x_{4}^{2} - r^2}}\frac{\partial}{\partial r}.
\end{equation} 
Moreover, (\ref{HBe}) reduces to an ODE: 
$\left(f^{\prime \prime} f^2 + 2(f^{\prime})^2 f \right) \zeta(1-\zeta^2)-2f^{\prime}f^2(1+\zeta^2)=0$ with the solution ($C$ is an integration constant): 
$$f(\zeta) = C\left(\frac{2\zeta}{1-\zeta^{2}}+\ln\frac{1-\zeta}{1+\zeta}\right)^{\frac{1}{3}}.$$
The corresponding field $\varphi$ is therefore $\sigma_3$-critical. As in this case $\lambda_{1}^{2}\lambda_{2}^{2}\lambda_{3}^{2} = (x_{4}^{2} - r^2)^{-3}$, the associated stiff perfect fluid is $(U, p=(x_{4}^{2} - r^2)^{-3}, \rho = p)$, cf. also \cite{cso}. Recall that the energy conservation along flow lines is satisfied by construction as \eqref{titi} is true for any submersion.

Since $U$ is a standard conformal vector field, it is possible to construct a horizontally conformal submersion with fibers tangent to $U$. A way to do this is to firstly impose horizontal conformality (i.e. $\lambda_{1}^{2}=\lambda_{2}^{2}$) and then to employ Remark 3.1 $(e)$ to render $\varphi$ $\sigma_3$-critical.
We illustrate this strategy for a co-rotational submersion to the 3-sphere $\psi : \mathbb{M}^4 \to (\Ss^3, can)$ given by 
$$\left(x_4, \ r(\cos s , \ \sin s \cdot e^{i\theta})\right) \mapsto
\left(\cos \beta(x_4, r), \ \sin \beta(x_4, r) \, (\cos s , \ \sin s \cdot e^{i \theta})\right),$$
depending on a function $\beta : \RR \times (0, \infty) \to [0, \pi]$. 
Restricting again to an ansatz $\beta(x_4, r)= f(r/x_4)$ in the interior of the forward lightcone and asking $\varphi$ to be horizontally conformal (i.e. $\lambda_{1}^{2}=\lambda_{2}^{2}$), we are leaded to an ODE with the explicit solution $f=\arcsin$. 
In this case: $\lambda_{1}^{2}=\lambda_{2}^{2}=\lambda_{3}^{2}=x_{4}^{-2}$ and after a conformal rescale of the $\Ss^3$ metric $\widetilde h = (\cos^{-2}\beta) \cdot \textit{can}$, we shall obtain (via Remark 3.1 $(e)$) a $\sigma_3$-critical map  into a hemisphere, with the same associated perfect fluid. 
\end{ex}

Let us comment some particular features of the above solution:

\noindent $\bullet$ the flow is not accelerating. That is, the associated submersion have minimal fibers ($\nabla_U U = 0$) and the transversal "volume" $\lambda_{1}^{2}\lambda_{2}^{2}\lambda_{3}^{2}$ is constant in horizontal directions;

\noindent $\bullet$ the flow vector $U$ is \textit{shear-free}, i.e. $\mathcal{L}_U g^{\HH} - \frac{2}{3} \mathrm{div}U \cdot g^{\HH}=0$;

\noindent $\bullet$ the \textit{bulk viscosity} of the fluid is divergence free, i.e. $\mathrm{div}(\mathrm{div}U \cdot g^{\HH})=0$;

\noindent $\bullet$  the \textit{heat-flow vector} $Q=\gr^{\HH}T + T\nabla_U U$, for the standard temperature profile $T=(x_{4}^{2} - r^2)^{-\frac{3}{2}}$ is zero.
Together with the above two properties this means that this extension of Hwa-Bjorken flow is also a solution of the relativistic Navier-Stokes equations (see Appendix).

\begin{ex}[\textbf{Extended Hwa-Bjorken flow (II)}]\label{bjo2}
Let $\RR^3$ be the Euclidean space parametrized by cylindrical coordinates $(x_3 , x_{\perp}\cdot e^{\ii \phi})$ and regard $\mathbb{M}^4$ as $\RR \times \RR^3$. Consider $\varphi : \mathbb{M}^4 \to \RR^3$, an $SO(2)$-equivariant submersion of the following form:
\begin{equation}
\left(x_4, \ x_3 , \ x_{\perp}\cdot e^{\ii \phi} \right) \mapsto
\left(\beta(x_4, x_3), \ x_{\perp}\cdot e^{\ii \phi} \right),
\end{equation}
depending on a function $\alpha : \RR \times [0, \infty) \to \RR$.
Restricting $\varphi$ to the future wedge $x_4 > \abs{x_3}$ and choosing the ansatz $\beta(x_3, x_4)=f(x_3 / x_4)$ we can proceed as in the previous example to find the solution $f(x_3 / x_4)= C \, \mathrm{arctanh}(x_3 / x_4)$ ($C$ is an integration constant) for the reduced equation, solution that provide us with a $\sigma_3$-critical submersion. The corresponding stiff perfect fluid is then given by:
\begin{equation}\label{HBflow2}
U=\frac{x_4}{\sqrt{x_{4}^{2} - x_{3}^{2}}}\frac{\partial}{\partial x_4}+\frac{x_{3}}{\sqrt{x_{4}^{2} - x_{3}^{2}}}\frac{\partial}{\partial x_{3}}, \quad \rho = p = \frac{1}{x_{4}^{2} - x_{3}^{2}}
\end{equation}
\end{ex}
Notice that the flow (\ref{HBflow2}) is no more shear-free and this solution does not trivially satisfy the  relativistic Navier-Stokes equations as the previous one did. Nevertheless in \cite{gub} a correction of the energy density has been given such that the same flow profile, with radiation EoS $\rho = 3p$, becomes an exact solution for the relativistic Navier-Stokes equations with $\chi=\zeta=0$ and non-zero shear viscosity coefficient.

\subsection{The $\sigma_{3}^{k}$-critical field / cosmological perfect fluid correspondence}

In what follows we present the generalization of the above approach to the (linear) barotropic EoS: $p=(\gamma-1)\rho$, where $\gamma$ is a constant. When $1 \leq \gamma \leq 2$, this EoS is called \textit{cosmological} \cite{choq}.

Consider the following energy integral of a submersive field 
$\varphi : (M^4, g) \to (N^3,h)$ from a spacetime to some 3-dimensional Riemannian manifold:
\begin{equation}\label{Ebaro}
\mathcal{E}_{\sigma_{3}^{k}}(\varphi)= \int_{M} \abs{\wedge^3 \dif \varphi}^{2k} \nu_g, \quad k > 0.
\end{equation}

\begin{te}\label{teo}
The stress-energy tensor associated to (\ref{Ebaro}) is given by:
\begin{equation}\label{s3pfgen}
S_{\sigma_{3}^{k}}(\varphi) = -\frac{2k-1}{2} \lambda_{1}^{2k}\lambda_{2}^{2k}\lambda_{3}^{2k} g^{\HH} - \frac{1}{2} \lambda_{1}^{2k}\lambda_{2}^{2k}\lambda_{3}^{2k} g^{\VV},
\end{equation}
\noindent and the Euler-Lagrange equations associated to (\ref{Ebaro}) are equivalent to:
\begin{equation}\label{ELbaro}
-(2k-1)\gr^{\HH}(\ln \lambda_{1}\lambda_{2}\lambda_{3}) + \mu^{\VV}=0.
\end{equation} 
\end{te}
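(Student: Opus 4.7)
The plan is to reduce the computation of the stress-energy tensor to the $\sigma_3$-case handled in Proposition~\ref{stif} via the chain rule, and then to translate the resulting criticality equation into the geometric form (\ref{ELbaro}) through the perfect-fluid dictionary of Section~2. For (\ref{s3pfgen}) I would start from the Newton tensor identity $\delta\sigma_3(\Cc_\varphi)=\tr\bigl(\chi_2(\varphi)\cdot\delta\Cc_\varphi\bigr)$ with $\delta\Cc_\varphi=\delta g^{-1}\cdot \varphi^{*}h$, yielding $\delta\sigma_3 = -\langle \varphi^{*}h\circ\chi_2(\varphi),\,\delta g\rangle$ (no $g$-term). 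Combining this with $\delta\nu_g=\tfrac12\langle g,\delta g\rangle\nu_g$ and differentiating the outer power, the variation of the action integrates to
\begin{equation*}
S_{\sigma_3^k}(\varphi) \;=\; \tfrac{1}{2}\sigma_3^k\,g \;-\; k\,\sigma_3^{k-1}\,\varphi^{*}h\circ\chi_2(\varphi),
\end{equation*}
which at $k=1$ agrees with (\ref{stres3}). Evaluating this tensor on the orthonormal eigenframe of $\Cc_\varphi$ provided by Proposition~\ref{stif}(i) and using the algebraic identities extracted inside the proof of Proposition~\ref{stif}(ii)---$\chi_2(\varphi)X_i=\lambda_j^2\lambda_\ell^2 X_i$ on each $\HH_i$, and the vanishing of $\varphi^{*}h\circ\chi_2(\varphi)$ as soon as one slot is vertical---the horizontal scalar coefficient collapses to $\tfrac12-k=-\tfrac{2k-1}{2}$ times $\sigma_3^k g^{\HH}$, while on $\VV\times\VV$ only the $\tfrac12\sigma_3^k g$ piece survives and equals $-\tfrac12\sigma_3^k g^{\VV}$ (since $g\vert_{\VV\times\VV}=-g^{\VV}$). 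This gives (\ref{s3pfgen}).

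For (\ref{ELbaro}) the cleanest route is to recognise $-2\,S_{\sigma_3^k}(\varphi)=(2k-1)\sigma_3^k\,g^{\HH}+\sigma_3^k\,g^{\VV}$ as the perfect-fluid stress tensor (\ref{perfstress}) with $p=(2k-1)\sigma_3^k$, $\rho=\sigma_3^k$, and linear barotropic equation of state $p=(2k-1)\rho$ (cosmological for $\tfrac12\le k\le 1$). For a submersion, $\sigma_3^k$-criticality is equivalent to $\di S_{\sigma_3^k}(\varphi)=0$ by the $\sigma_k$-type analogue of (\ref{ide}), and Proposition~\ref{perfeq}(ii) then delivers the relativistic Euler equation for this fluid. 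Computing the fluid index, $f(p)=\exp\int\tfrac{\dif p}{\rho+p}=\mathrm{const}\cdot\sigma_3^{(2k-1)/2}$, so $\ln f=(2k-1)\ln(\lambda_1\lambda_2\lambda_3)$ up to an additive constant, and the second identity of (\ref{curvgrad}), namely $\mu^{\VV}=\gr^{\HH}(\ln f)$, is precisely (\ref{ELbaro}). The remaining fluid identity---conservation of energy along flow lines, Proposition~\ref{perfeq}(i)---is automatic via Lemma~\ref{diu} applied to $\rho=\sigma_3^k$, so no separate verification is needed.

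The main technical obstacle lies in securing the clean $g$-free expression for $\delta\sigma_3$: a careless application of the chain rule (for instance starting from $\sigma_3=|\wedge^3 \dif\varphi|^2$ expanded in a coordinate frame) easily produces a spurious $\sigma_3\,g$ contribution that would shift the coefficient $-(2k-1)/2$ by a $k$-independent amount. Once that variation is pinned down, the remainder is the eigenframe collapse (parallel to Proposition~\ref{stif}(ii)) and the routine translation between sigma-model and perfect-fluid pictures supplied by equations (\ref{mcurv})--(\ref{curvgrad}).
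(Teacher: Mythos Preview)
Your proposal is correct and, for the stress-energy formula (\ref{s3pfgen}), follows exactly the paper's route: the chain-rule variation
\[
S_{\sigma_3^k}(\varphi)=\tfrac12\sigma_3^k\,g-k\,\sigma_3^{k-1}\,\varphi^*h\circ\chi_2(\varphi)
\]
is precisely what the paper writes down, and the eigenframe collapse is the same computation as in the proof of Proposition~\ref{stif}\,(ii).

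For (\ref{ELbaro}) your route differs slightly in presentation. The paper simply says ``take the divergence of $S_{\sigma_3^k}(\varphi)$ on a horizontal vector'', i.e.\ it computes $\di S_{\sigma_3^k}$ directly from the explicit form (\ref{s3pfgen}). You instead recognise $-2S_{\sigma_3^k}$ as the perfect-fluid tensor with $p=(2k-1)\rho$, invoke Proposition~\ref{perfeq}\,(ii), and then translate via the fluid index $f=\sigma_3^{(2k-1)/2}$ and the second identity of (\ref{curvgrad}). These are the same computation dressed differently: Proposition~\ref{perfeq} \emph{is} the horizontal divergence of a perfect-fluid stress tensor, and (\ref{curvgrad}) is just its rewriting in index form. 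Your route has the small advantage of making the EoS and the Corollary that follows the theorem appear immediately; the paper's has the advantage of not needing to justify the $\sigma_3^k$-analogue of (\ref{ide}) separately (the direct divergence already gives the horizontal equation, and the vertical component is handled by Lemma~\ref{diu}).
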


\begin{proof}
Use again (\ref{stres3}) to compute 
$$\frac{\dif}{\dif u} \Big| _{u=0}\mathcal{E}_{\sigma_{3}^{k}}(\varphi, g_u)=\int_M \langle \frac{1}{2} \sigma_{3}^{k}(\varphi, g)\cdot g - k\sigma_{3}^{k-1}(\varphi, g) \cdot \varphi^* h \circ \chi_2(\varphi , g),\ \delta g \rangle \nu_g.$$
In order to conclude, perform a similar computation as in the proof of Proposition \ref{stif} $(ii)$, then take the divergence of $S_{\sigma_{3}^{k}}(\varphi)$ on a horizontal vector.
\end{proof}

\begin{co}
Let $\varphi: (M^4, g) \to (N^3, h)$ be a submersion with timelike fibers tangent to the the unit vector $U$. Then the variational equations (\ref{ELbaro}) together with the identity \eqref{titi} represent, respectively, the relativistic Euler equation and the conservation of energy along flow lines for the perfect fluid $(U, \rho, p)$ with 
$\rho = \lambda_{1}^{2k}\lambda_{2}^{2k}\lambda_{3}^{2k}$
and the cosmological EoS $p=(2k-1)\rho$.
\end{co}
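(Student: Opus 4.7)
The plan is to read off the perfect fluid data $(U,\rho,p)$ directly from the stress-energy tensor \eqref{s3pfgen} given by Theorem \ref{teo}, and then to match each of the two fluid equations of Proposition \ref{perfeq} against \eqref{ELbaro} and \eqref{titi}, respectively. Since $g = g^{\HH} - g^{\VV}$ and $\omega\otimes\omega = g^{\VV}$, the canonical perfect fluid decomposition \eqref{perfstress} reads $\Tt = p\, g^{\HH} + \rho\, g^{\VV}$. Comparing this with $-2S_{\sigma_3^k}(\varphi)$ from \eqref{s3pfgen} forces $\rho = \lambda_1^{2k}\lambda_2^{2k}\lambda_3^{2k}$ and $p=(2k-1)\rho$, which is the announced cosmological EoS.

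For the energy conservation statement, I would substitute $\rho+p = 2k\rho$ into Proposition \ref{perfeq}(i) and divide by $2k\rho$ to obtain $\di U + U(\ln \rho^{1/(2k)}) = 0$, i.e.\ $\di U + U(\ln \lambda_1\lambda_2\lambda_3)=0$. Since $\sqrt{\sigma_3(\varphi)} = \lambda_1\lambda_2\lambda_3$, this is precisely identity \eqref{titi} from Lemma \ref{diu}, which holds automatically for any submersion (not just $\sigma_3^k$-critical ones). So \eqref{titi} is equivalent to the conservation of energy along flow lines for this choice of $(\rho,p)$.

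For the Euler equation, I would use the identification $\mu^{\VV} = -\nabla_U U$ from \eqref{mcurv} and the fact that, since $U$ spans $\VV$, the spatial gradient $\gr^{\perp} p$ is exactly $\gr^{\HH} p$. Then Proposition \ref{perfeq}(ii) becomes $-2k\rho\,\mu^{\VV} + \gr^{\HH}p = 0$. Using $p=(2k-1)\rho$ and $\gr^{\HH}\rho = 2k\rho\,\gr^{\HH}(\ln\lambda_1\lambda_2\lambda_3)$ (since $\rho = (\lambda_1\lambda_2\lambda_3)^{2k}$), after dividing by $2k\rho$ this collapses to
\begin{equation*}
\mu^{\VV} - (2k-1)\gr^{\HH}(\ln\lambda_1\lambda_2\lambda_3) = 0,
\end{equation*}
which is precisely \eqref{ELbaro}.

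There is no serious obstacle here: the result is a direct corollary once the stress-energy decomposition \eqref{s3pfgen} is in place and Lemma \ref{diu} is available. The only work is the algebraic bookkeeping of the factor $2k$ coming from $\rho+p = 2k\rho$ and the power $\rho = (\lambda_1\lambda_2\lambda_3)^{2k}$, which conspire so that the two factors of $2k$ produced by $\gr^{\HH}\rho$ and by $\rho+p$ cancel out to leave the clean coefficient $(2k-1)$ in \eqref{ELbaro}.
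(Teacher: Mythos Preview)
Your proposal is correct and follows exactly the approach the paper intends: the corollary has no separate proof in the paper because it is meant to be read off directly from the stress-energy decomposition \eqref{s3pfgen} of Theorem~\ref{teo} together with Proposition~\ref{perfeq} and Lemma~\ref{diu}, precisely as you have done (mirroring items $(iii)$ and $(iv)$ of Proposition~\ref{stif} for general $k$). The bookkeeping of the factors $2k$ and $2k-1$ is accurate.
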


In particular, we obtain a radiation EoS when $k=2/3$ (when the 
corresponding $\sigma_{3}^{k=2/3}$ - variational problem is conformally invariant), the cosmological constant EoS $\rho=-p$ when $k=0$ (identically constant Lagrangian) and the \textit{dust} EoS $p=0$ when $k=1/2$.

To illustrate the above results, simply notice that the submersions constructed in Examples \ref{bjo1} and \ref{bjo2} are $\sigma_{3}^{k}$-critical for all values of $k$. Accordingly, the perfect fluids corresponding to Example \ref{bjo1} are $(U, p=(x_{4}^{2} - r^2)^{-3k}, \rho=(2k-1)^{-1}p)$ with $U$ given by (\ref{HBflow}).

\medskip

Let us rewrite these solutions in the customary coordinates for heavy-ion collisions: 
$$
\tau = \sqrt{x_{4}^{2} - x_{3}^{2}}, \quad 
\eta = \mathrm{arctanh}\frac{x_3}{x_4}, \quad
x_{\perp} = \sqrt{x_{1}^{2} + x_{2}^{2}}, \quad
\phi = \arctan \frac{x_2}{x_1},
$$
where the Minkowski metric reads: $g=-\dif \tau^2 + \tau^2 \dif \eta^2+\dif x_{\perp}^{2} + x_{\perp}^{2}\dif \phi^2$. 

\begin{ex}\label{gubsnot}
The radiation type perfect fluid analogous to Example \ref{bjo1} is
\begin{equation}
U=\frac{\tau}{\sqrt{\tau^{2} - x_{\perp}^{2}}}\frac{\partial}{\partial \tau}+\frac{x_{\perp}}{\sqrt{\tau^{2} - x_{\perp}^{2}}}\frac{\partial}{\partial x_{\perp}}, \quad \rho = 3p = \frac{1}{(\tau^{2} - x_{\perp}^{2})^2}.
\end{equation}
Corresponding to Example \ref{bjo2}, we have the radiation type perfect fluid:
\begin{equation}\label{gubs0}
U=\frac{\partial}{\partial \tau}, \quad \rho = 3p = \frac{1}{\tau^{\frac{4}{3}}}.
\end{equation}
\end{ex}
A generalization of the perfect fluid (\ref{gubs0}) is given in \cite{gub, gubs}. It corresponds to the $SO(3)$-equivariant, $\sigma_{3}^{k}$-critical (for all $k$) submersion $\varphi: dS_3 \times \RR \to \mathbb{S}^2 \times \RR$,
$$
(\sinh \varrho , \ \cosh \varrho (\cos s, \sin s \cdot e^{\ii \phi}), \ \eta) \mapsto ((\cos s, \ \sin s \cdot e^{\ii \phi}), \ \eta),$$
where $dS_3$ is the de Sitter 3-dimensional space, $\mathbb{S}^2$ is the two-sphere and both domain and codomain are endowed with the product metric. The unit vertical vector is $U=\partial / \partial \varrho$ and the eigenvalues of $\Cc_\varphi$ are $\lambda_{1}^{2}=1$, $\lambda_{2}^{2}=\lambda_{3}^{2}=\cosh^{-2}\varrho$ ; thus  $\varphi$ induces in particular the radiation type fluid $(U, \rho=3p=\cosh^{-8/3}\varrho)$ corresponding to $\sigma_{3}^{k=2/3}$-variational problem.\\
As $dS_3 \times \RR$ is conformally equivalent to the Minkowski space and $\sigma_{3}^{k=2/3}$-energy is conformally invariant, the above submersion produces a $\sigma_{3}^{k=2/3}$-critical map (so a radiation type perfect fluid) on the (future wedge in the) Minkowski space.

\begin{re}
More general EoS can be treated analogously, by considering an energy  of the type $\int_M F(\abs{\wedge^3 \dif \varphi}^{2}) \nu_g$ (see also \cite{bei, cristo, langl, ratta}).
\end{re}

\subsection{$r$-Harmonic morphisms / shear-free barotropic fluids correspondence}

In the following we extend Remark \ref{remstif} $(d)$ by showing that the $r$-harmonic morphisms are related to shear-free perfect fluids with various EoS. Recall \cite{lube} that a ($r$-)harmonic morphism is a ($r$-)harmonic and horizontally conformal map. For examples and more details on harmonic morphisms see \cite{ud, baud, fug}.

\begin{pr}
Let $\varphi : (M^{4}, g) \to (N^3, h)$ be a submersion  with timelike fibers from a spacetime to some Riemannian manifold and $U$ an unit vertical timelike vector field. If $\varphi$ is horizontally conformal, i.e. $\lambda_{1}^{2}=\lambda_{2}^{2}=\lambda_{3}^{2}$, then

\medskip
$(i)$ \  $U$ is a shear-free vector field,

\medskip
$(ii)$ \ $\varphi$ is $\sigma_{3}^{k}$-critical if only if it is a $6k$-harmonic morphism. If this is the case, then 
$(U, p = (2k-1)\lambda^{6k}, \rho = \lambda^{6k})$ is a shear-free perfect fluid with the EoS $p= (2k-1)\rho$.
\end{pr}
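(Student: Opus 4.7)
I would split the proof along the two assertions.

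For (i), the starting point is that $U$ is tangent to the fibres of $\varphi$, so its local flow $\phi_t$ preserves fibres and satisfies $\varphi \circ \phi_t = \varphi$; differentiating gives $\mathcal{L}_U(\varphi^* h) = 0$. Horizontal conformality translates into the tensorial identity $\varphi^* h = \lambda^2 g^{\HH}$ on $M$ (both sides annihilate vertical arguments and agree on horizontal ones). Differentiating this product and using $\mathcal{L}_U(\varphi^* h) = 0$ yields
\begin{equation*}
\mathcal{L}_U g^{\HH} = -2\,U(\ln \lambda)\,g^{\HH}.
\end{equation*}
Now Lemma \ref{diu}, applied with $\sigma_3(\varphi) = \lambda_1^2\lambda_2^2\lambda_3^2 = \lambda^6$, gives $\di U = -3\,U(\ln \lambda)$, so the previous identity becomes $\mathcal{L}_U g^{\HH} = \tfrac{2}{3}\,\di U \cdot g^{\HH}$, which is exactly the shear-free condition for $U$.

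For (ii), I would substitute $\lambda_1\lambda_2\lambda_3 = \lambda^3$ into the Euler-Lagrange equation \eqref{ELbaro} of Theorem \ref{teo}, obtaining
\begin{equation*}
-3(2k-1)\,\gr^{\HH}(\ln \lambda) + \mu^{\VV} = 0.
\end{equation*}
This is precisely the equation $(n-p)\,\gr^{\HH}(\ln \lambda) + (m-n)\,\mu^{\VV} = 0$ with $p = 6k$, $m = 4$, $n = 3$ --- the analogue (recorded for $k = 1$ in Remark \ref{remstif} $(d)$) of the fundamental equation of harmonic morphisms characterizing $6k$-harmonicity among horizontally conformal submersions. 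Thus, under the standing horizontal conformality hypothesis, $\sigma_3^k$-criticality is equivalent to $\varphi$ being a $6k$-harmonic morphism. The description of the associated perfect fluid then drops out of the corollary to Theorem \ref{teo}: $\rho = \lambda_1^{2k}\lambda_2^{2k}\lambda_3^{2k} = \lambda^{6k}$, EoS $p = (2k-1)\rho$, and the shear-free character is supplied by part (i).

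The only real subtlety is the tensorial step in (i): one must read $\varphi^* h = \lambda^2 g^{\HH}$ as an equality of $(0,2)$-tensor fields on all of $M$, so that taking Lie derivatives is legitimate and no spurious vertical cross-terms appear. After this, the proof reduces to combining Lemma \ref{diu} with one algebraic identity; part (ii) is then a direct substitution together with the recognition of a known harmonic-morphism equation.
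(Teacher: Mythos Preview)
Your proof is correct and follows essentially the same route as the paper's. For $(i)$ the paper likewise uses $\mathcal{L}_U(\lambda^2 g^{\HH})=\mathcal{L}_U(\varphi^*h)=0$ and then says ``that gives us the result''; you make the last step explicit via Lemma~\ref{diu}, whereas one could equally well take the $g^{\HH}$-trace of $\mathcal{L}_U g^{\HH}=c\,g^{\HH}$ to identify $c=\tfrac{2}{3}\di U$ directly. For $(ii)$ both you and the paper compare \eqref{ELbaro} with the fundamental equation $(n-r)\gr^{\HH}(\ln\lambda)+(m-n)\mu^{\VV}=0$ for $r$-harmonic morphisms at $r=6k$, $m=4$, $n=3$.
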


\begin{proof}
$(i)$ As $\varphi$ is horizontally conformal, we have $\lambda^2 g^{\HH} =\varphi^* h$ and therefore $\mathcal{L}_{U} \left(\lambda^2 g^{\HH}\right)=0$ that gives us the result.

\noindent $(ii)$ For a harmonic morphism on a $m$-dimensional spacetime, the fundamental equations hold \cite{sig}:
$(n-2)\gr^{\HH}(\ln \lambda) + (m-n)\mu^{\VV}=0$. They can be generalized for $r$-harmonic morphisms as follows:
\begin{equation*}
(n-r)\gr^{\HH}(\ln \lambda) + (m-n)\mu^{\VV}=0,
\end{equation*}
 where $m=4$ and $n=3$ in our case. The conclusion easily follows.
\end{proof}

For instance, if $\varphi$ is a harmonic morphism with dilation $\lambda^2$, then $(U, p = -\frac{1}{3}\lambda^2, \rho = \lambda^2)$ is a shear-free perfect fluid with the EoS $\rho = -3p$ (\emph{quintessence}). Analogously, if $\varphi$ is a $4$-harmonic morphism, then $(U, p = \frac{1}{3}\lambda^4, \rho = \lambda^4)$ is a shear-free perfect fluid with the EoS $\rho = 3p$ (\emph{radiation}).

We emphasize the fact that locally there is a one to one correspondence between horizontally conformal maps and shear-free (vertical) vector fields,
cf. \cite[Prop. 2.5.11]{ud}. According to the previous Proposition this correspondence holds also between $r$-harmonic morphisms and shear-free linear-barotropic fluid flows (general barotropic flows can be treated analogously by employing the notion of $F$-harmonic morphism \cite{ou}). Now recall that harmonic morphisms with one dimensional fibers defined on Riemannian manifolds are subject to various classification results \cite{bry, pan}. Similar results for $r$-harmonic morphisms in Lorentzian signature (with the condition of timelike fibers) would locally classify the associated shear-free perfect fluids. The main result in \cite{te} is an example of such classification, for radiative shear-free fluids that are coupled with gravitation (see next section).

\begin{ex}[\textbf{Morawetz flow}]
For the submersions defined by \eqref{bjsubm} let us consider the $\sigma_{3}^{2/3}$-Euler-Lagrange equations (\ref{ELbaro}) that reduce to one single PDE analogous to (\ref{HBe}):
\begin{equation}\label{radiationeq}
\cosh \Omega \left(\frac{\partial \Omega}{\partial x_4} + \frac{1}{3}\frac{\partial (\ln \lambda_{1}\lambda_{2}\lambda_{3})}{\partial r} \right) + \sinh \Omega \left(\frac{\partial \Omega}{\partial r} + \frac{1}{3}\frac{\partial (\ln \lambda_{1} \lambda_{2} \lambda_{3})}{\partial x_4} \right)=0.
\end{equation}
This time we impose the ansatz $\alpha(x_4, r)=f(\ln(\frac{x_{4}^{2}-r^2}{r}))$ defined on the forward lightcone interior. The vertical timelike vector of these submersions will be the (normalized) Morawetz vector field \cite{mor}:
\begin{equation}\label{MORflow}
U=\frac{x_{4}^{2} + r^2}{x_{4}^{2} - r^2}\frac{\partial}{\partial x_4}+\frac{2 x_{4} r}{x_{4}^{2} - r^2}\frac{\partial}{\partial r}.
\end{equation}
In this case $\lambda_{1}^{2}=(f^{\prime})^2 / r^2$
and $\lambda_{2}^{2}=\lambda_{3}^{2}=f^2 / r^2$ and (\ref{radiationeq}) reduces again to an ODE: $(\ln f^{\prime}f^2)^{\prime}=-3$. The corresponding solution $\alpha(x_4, r)=\frac{r}{x_{4}^{2}-r^2}$ provides us with a horizontally conformal $\sigma_{3}^{2/3}$-critical map, that is with a 4-harmonic morphism. Computing $\rho$ $=(\lambda_{1}\lambda_{2}\lambda_{3})^{4/3}=\lambda^{4}=(x_{4}^{2}-r^2)^{-4}$, we get the associated radiative perfect fluid $(U, \rho, p=\frac{1}{3}\rho)$. This shear-free and accelerated (i.e. $\nabla_U U \neq 0$) flow was pointed out in \cite{nag} where other exact solutions with $\Omega(x_4, r)= \kappa \cdot \mathrm{arctanh}(r/x_{4})$, $\kappa \in \RR$ can be found. Recall that the Morawetz vector field belongs to the family of \textit{special conformal transformations} so this example may admit further generalizations. 
\end{ex}

\begin{ex}[\textbf{Skew projection's flow}]
Consider the cylinders $$\mathcal{C}^{4}_{1}(q) = \{x= (x_4, (x_3 , \  x_{\perp} \cdot e^{\ii \phi})) \in \mathbb{R}^{4}_{1} \ \vert \ x_{\perp}^{2} < q^{-2}\}, \quad \mathcal{C}^{3}(q) = \{\tilde x =(\tilde{x}_{3} , \tilde{x}_{\perp} \cdot e^{\ii \tilde \phi}) \in \mathbb{R}^3 \ \vert \ \tilde{x}_{\perp}^{2} < q^{-2}\}$$ in Minkowski and 3-Euclidean space, respectively. On the first consider the induced Lorentzian metric and on the second consider the deformed metric
$\tilde h =(1 - q^2 \tilde{x}_{\perp}^2)\cdot[\dif \tilde{x}_3^{2} + \dif \tilde{x}_{\perp}^{2}]+ \tilde{x}_{\perp}^{2}\dif \tilde{\phi}^2$. Let $\varphi : \mathcal{C}^{4}_{1}(q) \to \mathcal{C}^{3}(q)$ be the axially symmetric submersion given by:
\begin{equation}
\left(x_4, \ (x_3 , \  x_{\perp} \cdot e^{\ii \phi})\right) \mapsto
\left(x_{3} , \ x_{\perp} \cdot e^{\ii (\phi - q x_4)}\right).
\end{equation}
We can easily verify that $U= \frac{1}{\sqrt{1 - q^2 x_{\perp}^2}} \frac{\partial}{\partial x_4} + \frac{q}{\sqrt{1 - q^2 x_{\perp}^2 }} \frac{\partial}{\partial \phi}$ is the unit timelike vertical vector field and the eigenvalues of the Cauchy-Green tensor of $\varphi$ are $\lambda_{1}^{2} = \lambda_{2}^{2} = \lambda_{3}^{2} = 1 - q^2 x_{\perp}^2 :=\lambda^{2} $ and
$\lambda_{4}^{2} = 0$.
Now let us notice that the following identities hold:
\begin{equation}\label{skeweq}
\nabla_U U - \gr^{\HH}(\ln \lambda)=0, \quad
\di U + U(\ln \lambda^3)=0 
\end{equation}
where the second is trivially satisfied as $U$ is divergence free.  Then (\ref{skeweq}) tells us simultaneously that $\left(U, \ p = - \rho / 3, \ \rho=\lambda^2\right)$ satifies the equations of a (shear-free) perfect fluid and that the associated submersion $\varphi$ is a harmonic morphism.
\end{ex}

\begin{ex}[\textbf{Radiation flow in Einstein universe}]
Consider the static spacetime $(\RR \times \Ss^3, g)$ parametrized as $(t, \cos s \cdot e^{\ii \phi_1}, \sin s \cdot e^{\ii \phi_2})$ so that the product Lorentzian metric reads $g=-\dif t^2 + \dif s^2 + \cos^{2}s \, \dif \phi_{1}^{2}+\sin^2 s \, \dif \phi_{2}^{2}$.
Take $\omega_1,\omega_2 <1$ and define the axially symmetric submersion
$\varphi_{\omega}: \RR \times \Ss^3 \to \Ss^3$ by
$$
(t, \cos s \cdot e^{\ii \phi_1}, \sin s \cdot e^{\ii \phi_2})
\mapsto
(\cos s \cdot e^{\ii (\phi_1 - \omega_1 t)}, \sin s \cdot e^{\ii (\phi_2- \omega_2 t)}).
$$
whose unit timelike vertical vector field is:
$$
U=\frac{1}{\sqrt{1-\omega_{1}^{2}\cos^2 s-\omega_{2}^{2}\sin^2 s}}\left(\frac{\partial}{\partial t}+\omega_1 \frac{\partial}{\partial \phi_1}+\omega_2 \frac{\partial}{\partial \phi_2}\right).
$$
Write the standard metric on $\Ss^3$ as $h= \sum_{i=1}^{3} e^i \otimes e^i$ with $e^i$'s given by: 

$e^1 = \dif s$, 

$e^2 = (\omega_{1}^{2}\cos^2 s + \omega_{2}^{2}\sin^2 s)^{-1/2}(\omega_{1}\cos^2 s \, \dif \phi_1+\omega_{2}\sin^2 s \, \dif \phi_2)$ and
 
$e^3 = \sin s \cos s (\omega_{1}^{2}\cos^2 s + \omega_{2}^{2}\sin^2 s)^{-1/2}
(\omega_{2} \, \dif \phi_1-\omega_{1} \, \dif \phi_2)$.

Now consider the following non-standard metric on the codomain:\\
$\tilde{h}= A(s)^{-2}[A(s)e^1 \otimes e^1+e^2 \otimes e^2+A(s)e^3 \otimes e^3]$, where $A(s):=1-\omega_{1}^{2}\cos^2 s-\omega_{2}^{2}\sin^2 s$. Then the eigenvalues for $\Cc_\varphi$ with respect to $\tilde{h}$  are: $\lambda_{1}^{2}=\lambda_{2}^{2}= \lambda_{3}^{2}=A(s)^{-1}:=\lambda^{2}$, $\lambda_{4}^{2}=0$.

We can check that $\varphi_\omega$ is a 4-harmonic morphism (so $\sigma_{3}^{2/3}$-critical submersion), and therefore $U$ is a shear-free perfect fluid with the energy density $\rho=(1-\omega_{1}^{2}\cos^2 s-\omega_{2}^{2}\sin^2 s)^{-2}$ and $p=\frac{1}{3}\rho$. This solution has been analysed in \cite[$\S$3]{bha} where it is shown that it also satisfies the relativistic Navier-Stokes equations for an appropriate temperature profile.
\end{ex}

\subsection{Irrotational flows}

In the following we characterize irrotational flows in terms of their associated  field.

Suppose that the submersive field with timelike fibers $\varphi : (M^4, g) \to (N^3,h)$ is a critical point for the functional $\mathcal{E}_{\sigma_{3}^{k}}$. Then, according to Corollary 3.1, the timelike unit vector $U$ that spans the fibres is the flow vector field of a perfect fluid. Let  $V = (1/f)U$ be the \textit{fundamental vector field} of $\varphi$, where $f$ is the index of the fluid. Denote its dual 1-form with $\vartheta$. The next result and its proof are analogous to the Riemannian case, see \cite[p. 341]{ud}.

\begin{pr} 
The 2-form $\dif \vartheta$ identically vanishes if and only if the horizontal distribution is integrable, i.e. $U$ is orthogonal to a (spacelike) hypersurface. 
\end{pr}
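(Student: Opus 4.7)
The 1-form $\vartheta$ is a nowhere-vanishing pointwise rescaling of $\omega=U^{\flat}$, so $\ker\vartheta=\HH$. By the dual form of the Frobenius theorem, $\HH$ is integrable if and only if $d\vartheta\wedge\vartheta=0$. Hence the forward direction is immediate: $d\vartheta\equiv 0$ trivially implies $d\vartheta\wedge\vartheta=0$, and therefore $\HH$ is integrable.

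For the converse, I would decompose $d\vartheta$ with respect to the splitting $TM=\VV\oplus\HH$. The $\VV\times\VV$ component vanishes for dimensional reasons (since $\dim\VV=1$), and the $\HH\times\HH$ component vanishes by Frobenius as soon as $\HH$ is assumed integrable. So it remains to show that the mixed component is zero, namely that $d\vartheta(U,Y)=0$ for every $Y\in\HH$.

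To compute this mixed component I would use the standard identity $d\beta(X,Y)=g(\nabla_X W,Y)-g(\nabla_Y W,X)$ for $\beta=W^{\flat}$, applied with $W=V=(1/f)\,U$. A Leibniz expansion, combined with $g(\nabla_Y U,U)=0$ (from $g(U,U)=-1$ being parallel), reduces $d\vartheta(U,Y)$ to a linear combination of $Y(\ln f)$ and $g(\nabla_U U,Y)$. At this point I would plug in the Euler equation rewritten in fluid-index form, $\nabla_U U=-\gr^{\HH}\ln f$, which follows from (\ref{mcurv}) and the second identity in (\ref{curvgrad}). The two remaining contributions then cancel exactly and give $d\vartheta(U,Y)=0$, as required.

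The computation itself is short; the conceptual point — and what I expect to be the main obstacle for the reader — is recognizing why criticality of $\varphi$ is needed at all. Integrability of $\HH$ gives only $d\vartheta\wedge\vartheta=0$, not $d\vartheta=0$, so one needs an extra input to promote the former to the latter. The Euler equation provides exactly that input: the conformal factor carried by $\vartheta$ (the thermodynamic index $f$) is precisely calibrated so that the $df\wedge\omega$ piece of $d\vartheta$ cancels the $d\omega$ contribution along $U$, and this cancellation is the analytic content of Euler's equation for a barotropic flow.
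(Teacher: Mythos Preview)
Your approach is exactly the intended one: the paper does not give an independent argument but refers to the analogous Riemannian computation in Baird--Wood, which proceeds precisely via the Frobenius splitting of $d\vartheta$ and the cancellation of the mixed term by the ``fundamental equation''. So conceptually you are on target.

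There is, however, a concrete issue at the last step that you should not skip over. Carry out the computation you describe: with $\vartheta=(1/f)\,\omega$ and $\omega(U)=g(U,U)=-1$, one gets
\[
d\vartheta(U,Y)=Y(1/f)+\tfrac{1}{f}\,g(\nabla_U U,Y)
=-\tfrac{1}{f}\,Y(\ln f)+\tfrac{1}{f}\,g(\nabla_U U,Y).
\]
Plugging in $\nabla_U U=-\gr^{\HH}\ln f$ (from \eqref{mcurv}--\eqref{curvgrad}) yields $-\tfrac{2}{f}\,Y(\ln f)$, not zero. The cancellation you are counting on occurs only if the fundamental vector field is taken as $V=f\,U$ (so $\vartheta=f\,\omega$), which is in fact the standard \emph{dynamic 4-velocity} the paper alludes to; with that normalization the same computation gives $d\vartheta(U,Y)=Y(f)+f\,g(\nabla_U U,Y)=fY(\ln f)-fY(\ln f)=0$. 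In other words, the paper's $V=(1/f)U$ is almost certainly a typo for $V=fU$, and your proof works verbatim once this is corrected. The moral: when the whole argument hinges on a single sign cancellation in Lorentzian signature (where $\omega(U)=-1$ flips a sign relative to the Riemannian case), do the three-line check rather than asserting it.
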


So $\dif \vartheta$ will be again called the \textit{integrability $2$-form} of $\varphi$. Notice that $V$ and $\dif \vartheta$ play a r\^ole similar to the \textit{dynamic $4$-velocity} and the \textit{vorticity tensor} \cite{choq}.

An easy check shows us that the horizontal distribution is integrable for Examples \ref{bjo1} and \ref{bjo2}, so $U$ is irrotational in both cases.

\section{Coupling with gravity}

Let $(M,g)$ be a 4-dimensional spacetime, with $\mathrm{Ric}^M$ and $\mathrm{Scal}^M$ denoting its Ricci and scalar curvatures, respectively. We call \textit{perfect fluid coupled with gravity} a triple $(U, \rho, p)$ that solves \textit{the Einstein equation}: 
\begin{equation}\label{ein}
\mathrm{Ric}^M - \frac{1}{2}\mathrm{Scal}^M \cdot g = \alpha \Tt,
\end{equation} 
where $\alpha$ is constant and $\Tt$ is the fluid stress-energy tensor given by (\ref{perfstress}). The existence of such solution impose severe restrictions on the curvature as shown by the following result (see e.g. \cite{one}).

\begin{pr}
Let $U$ be a future-pointing timelike unit vector field on the spacetime $M$. 
Then $U$ is the flow vector field of a perfect fluid coupled with gravity if and only if $\mathrm{Ric}^M(X, Y)$ is proportional to $g(X, Y)$ and $\mathrm{Ric}^M(X, U) = 0$ for all $X$ and $Y$ orthogonal to $U$. In this case the energy density and the pressure of the fluid are:
$$
\rho=\frac{1}{\alpha}\left[\mathrm{Ric}^M(U,U) + \frac{1}{2}\mathrm{Scal}^M \right], \quad
p=\frac{1}{3\alpha}\left[\mathrm{Ric}^M(U,U) - \frac{1}{2}\mathrm{Scal}^M \right]
$$
\end{pr}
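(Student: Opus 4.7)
The statement is an equivalence, so I would prove each direction separately via direct algebraic manipulation of the Einstein equation, the only subtlety being careful bookkeeping of the sign $g(U,U)=-1$.

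\emph{Necessity.} Assume $(U,\rho,p)$ solves \eqref{ein} with $\Tt = p g + (p+\rho)\omega\otimes\omega$. First I would take the metric trace of both sides. Using $\tr_g g = 4$ and $\tr_g(\omega\otimes\omega)=g(U,U)=-1$, I get $-\mathrm{Scal}^M = \alpha(4p-(p+\rho))$, i.e.\ $\mathrm{Scal}^M = \alpha(\rho-3p)$. Next, for arbitrary $X,Y\perp U$, the term $\omega\otimes\omega$ contributes nothing to $\alpha\Tt(X,Y)$, so \eqref{ein} reduces to $\mathrm{Ric}^M(X,Y) = \bigl(\tfrac{1}{2}\mathrm{Scal}^M+\alpha p\bigr)g(X,Y)$, which is the proportionality claim. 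Similarly $\mathrm{Ric}^M(X,U)=0$ because every term on the right of \eqref{ein} carries a factor $g(\cdot,U)$ that vanishes. Finally, evaluating \eqref{ein} on $(U,U)$ with $\omega(U)=g(U,U)=-1$ yields $\mathrm{Ric}^M(U,U)=-\tfrac{1}{2}\mathrm{Scal}^M+\alpha\rho$, giving the stated formula for $\rho$. Combined with $\mathrm{Scal}^M=\alpha(\rho-3p)$, this produces the stated formula for $p$.

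\emph{Sufficiency.} Assume the curvature hypotheses, define $\rho$ and $p$ by the formulas in the statement, and set $\Tt$ by \eqref{perfstress}. I would show that the tensor
\begin{equation*}
G := \mathrm{Ric}^M - \tfrac{1}{2}\mathrm{Scal}^M\cdot g - \alpha\Tt
\end{equation*}
vanishes by checking the three independent blocks of the orthogonal decomposition $TM = \mathbb{R}U\oplus U^{\perp}$. For $X\perp U$, $G(X,U)=0$ is immediate from $\mathrm{Ric}^M(X,U)=0$ and $\omega(X)=0$. For $(U,U)$, $G(U,U)=\mathrm{Ric}^M(U,U)+\tfrac12\mathrm{Scal}^M-\alpha\rho$, which vanishes by definition of $\rho$. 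For $X,Y\perp U$, the hypothesis gives $\mathrm{Ric}^M(X,Y)=\lambda\, g(X,Y)$ for some scalar $\lambda$; tracing against an orthonormal frame $\{U,e_1,e_2,e_3\}$ of signature $(-,+,+,+)$ yields $\mathrm{Scal}^M=-\mathrm{Ric}^M(U,U)+3\lambda$, whence $\lambda=\tfrac{1}{3}[\mathrm{Scal}^M+\mathrm{Ric}^M(U,U)]$. Plugging the defining formula for $p$ into $\tfrac12\mathrm{Scal}^M+\alpha p$ reproduces exactly this value of $\lambda$, so $G(X,Y)=0$ on $U^{\perp}\times U^{\perp}$ as well.

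The argument is essentially a block-diagonal verification; there is no genuine obstacle, only the risk of a sign slip from the Lorentzian normalization $g(U,U)=-1$, so I would be most careful when computing the scalar trace and the $(U,U)$ component of $\omega\otimes\omega$.
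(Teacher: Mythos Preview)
Your argument is correct: the block-by-block verification of the Einstein equation along the decomposition $TM=\RR U\oplus U^\perp$ goes through exactly as you describe, and the sign bookkeeping is right. The paper itself does not supply a proof of this proposition; it simply cites O'Neill's textbook, so there is nothing to compare your approach against beyond noting that your direct algebraic route is the standard one.

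One small remark you may want to add in the sufficiency direction: by Definition~\ref{def} a perfect fluid also requires $\di\Tt=0$. You have verified that $\alpha\Tt$ equals the Einstein tensor $\mathrm{Ric}^M-\tfrac12\mathrm{Scal}^M g$, and the contracted Bianchi identity then gives $\di\Tt=0$ for free, so $(U,\rho,p)$ is genuinely a perfect fluid. This is implicit but worth saying explicitly.
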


A standard case when the above conditions on Ricci tensor are satisfied is provided by the warped product $(M = I \times_{f} N, g=-\dif t^2 + f^2 \cdot h)$ where $(N, h)$ is a (Riemannian) Einstein manifold and $f$ a function defined on the real interval $I$. 

\medskip

Turning to the variational picture, let $\varphi: (M^4, g) \to (N^3, h)$ be a smooth map from a Lorentzian manifold to a Riemannian 3-manifold. Consider a
common energy $\mathcal{E}(g,\varphi)$ for the field $\varphi$ coupled to gravity given as
\begin{equation}
\mathcal{E}(g ,\varphi)= \mathcal{E}_{grav}(g) - \alpha \mathcal{E}_{field}(g ,\varphi),
\end{equation}
where $\mathcal{E}_{grav}(g) = \int_{M}\mathrm{Scal}^M \nu_g$, $\mathcal{E}_{field}(g ,\varphi) = \int_{M}\abs{\wedge^3 \dif \varphi}^{2k}\nu_g$ and $\alpha$ is a coupling constant. For a similar coupling of the standard kinetic term with gravity, see e.g. \cite{heu, mi, mus}.

Then the stationary points of $\mathcal{E}(g ,\varphi)$ with respect to smooth variations of $g$ and $\varphi$, on any
compact domain $K \subset M$, are characterized by:
\begin{equation}\label{couple}
\left\{
\begin{array}{ccc}
\mathrm{Ric}^M - \frac{1}{2}\mathrm{Scal}^M \cdot g &=& \alpha S_{\sigma_{3}^{k}}(\varphi); \\[2mm]
\tau_{\sigma_{3}^{k}}(\varphi) &=& 0,
\end{array}
\right.
\end{equation}
where $S_{\sigma_{3}^{k}}(\varphi)$ and $\tau_{\sigma_{3}^{k}}(\varphi)$ are the stress-energy tensor and Euler-Lagrange operator associated to $\mathcal{E}_{field}$.

According to Theorem \ref{teo}, if a submersion with timelike fibers $\varphi$ satisfies the first equation in \eqref{couple} then it will provide us with a (linear) barotropic fluid coupled to gravitation. Notice that the second equation in \eqref{couple} will be a consequence of the first one. 

Let us give a simple example. More examples of $\sigma_{3}^{k}$-critical submersions coupled with gravity can be obtained from the perfect fluid solutions of the Einstein equations discussed in \cite{steph}. 

\begin{ex}
Let us consider the Robertson-Walker spacetime $M_{k, f} = I \times_{f} N$, where $(N,h)$ is a 3-dimensional, connected Riemannian manifold of constant curvature $k$. Then $U=\partial / \partial t$ is the flow vector of a barotropic perfect fluid coupled with gravity, with $\rho$ and $p$ depending only on the time $t \in I$ \cite{one} (in fact it solves the full relativistic Navier-Stokes equations as the dissipative terms of the stress-energy tensor vanish in this case). 

If we take $f(t)=at+b$ an affine function, then $\rho = 3(a^2 + k)(at+b)^{-2}$ and the EoS for this fluid is: $p=-\frac{1}{3}\rho$. The associated $\sigma_{3}^{k=1/3}$-critical submersion is the projection on the second factor $\pi_2: (M_{k, f}, -\dif t^2 + f^2 (t) h) \to (N, 3(a^2 + k)\cdot h)$; it is, in particular, a harmonic morphism (horizontally homothetic with totally geodesic fibers) and a solution of (\ref{couple}) for $k=1/3$ and $\alpha=-2$.

If we take $f(t)=\sqrt{2at-kt^2}$, $a>0$ (defined only for finite time when $k>0$), then $\rho = 3a^2 f(t)^{-4}$ and the EoS for this fluid is: $p=\frac{1}{3}\rho$. The associated $\sigma_{3}^{k=2/3}$-critical submersion is again the projection $\pi_2: (M_{k, f}, -\dif t^2 + f^2 (t) h) \to (N, \ a\sqrt{3}\cdot h)$; it is also a 4-harmonic morphism and a solution of (\ref{couple}) for $k=2/3$ and $\alpha=-2$. 
\end{ex}

\section{Appendix: Thermodynamic degrees of freedom}
Recall that generally a relativistic fluid is characterized by five thermodynamic scalars that are subject to the \textit{first law of thermodynamics} (\textit{Gibbs equation}):
\begin{equation}\label{gib}
\dif \rho = \frac{\rho + p}{n}\dif n + nT\dif s.
\end{equation}
Here $n$ is the \textit{particle number density} with the associated conservation law $\di (nU)=0$, $s$ is the \textit{specific entropy} and $T$ is the \textit{temperature}.
Gibbs equation shows that in general two of these five scalars are needed as independent variables. For example, taking $n$ and $s$ as independent, the remaining thermodynamical scalars are $p(n, s)$, $\rho(n, s)$, $T(n, s)$, and given any one of these, say $\rho = \rho(n, s)$, the others will be determined by $p=-\rho + n\frac{\partial \rho}{\partial n}$ and $T=\frac{1}{n} \cdot \frac{\partial \rho}{\partial s}$.

In our context $n=\lambda_1 \lambda_2 \lambda_3$ (so that the conservation law $\di (nU)=0$ is automatically satisfied according to Lemma \ref{diu}) and $\rho=F(n^2)$, so $p=-F(n^2)+2n^2F^{\prime}(n^2)$. Therefore a fluid that comes from a sigma model will always have a barotropic EoS for the energy density and the pressure. It can be easily shown that in this case the fluid is \textit{isoentropic} (i.e. $s$ is constant), while a general perfect fluid is only \textit{adiabatic} (i.e. $U(s)=0$). It would be interesting to find an extension of the action principle used in this paper that account for entropy production (at least by shear and bulk viscosities).

Recall that the dissipative fluids (or \textit{imperfect} fluids) dynamics is modelled by the \textit{relativistic Navier-Stokes equations} $\di \Tt =0$,  where $\Tt$ is the \textit{modified} stress-energy tensor (\cite[p.567]{mis}, \cite[p.56]{wei}) given by:
\begin{equation*}\label{ns}
\Tt=p \, g + (p + \rho)\omega \otimes \omega 
- \eta \left(\mathcal{L}_U g^\HH - \frac{2}{3}\di U \cdot g^\HH \right)  - \chi(\theta \otimes \omega + \omega \otimes \theta) - \zeta \di U \cdot g^\HH.
\end{equation*}
where $\theta = (\gr^{\HH} T + T \nabla_{U}U)^{\flat}$ is the dual of the \textit{heat-flow vector} and $\eta$, $\chi$, $\zeta$ are the \textit{shear viscosity}, \textit{heat conduction} and \textit{bulk viscosity} coefficients, respectively (all are scalars that may depend on the temperature and the mean free time). The above stress-energy choice is motivated by the fact that together with \eqref{gib} it assures a positive rate of entropy generation: $\di (nsU - \frac{\chi}{T}\theta^{\sharp}) \geq 0$ (\textit{the second law of thermodynamics}).

\bigskip

\begin{acknowledgments}
I thank C\u alin Alexa (IFIN-HH, Bucharest and CERN, Geneva) for introducing me this subject and for helpful discussions, Gabriel B\u adi\c toiu (IMAR, Bucharest) for many comments on the preliminary manuscript and Willie Wong (University of Cambridge) for pointing me to the reference \cite{cristo}. I am also grateful to Professor Tudor Ra\c tiu and the Department of Mathematics at \emph{Ecole Polytechnique F\'ed\'erale de Lausanne} for hospitality during the preparation of the present paper. This research was supported by \emph{PN II Idei Grant, CNCSIS, code 1193}.
\end{acknowledgments}

\label{lastpage}
\end{document}